\documentclass[11pt]{article}
\usepackage[T1]{fontenc}
\usepackage{lmodern}
\usepackage[a4paper,margin=28mm]{geometry}
\usepackage{amsmath,amssymb,amsthm,mathtools}
\usepackage{booktabs,array,enumitem,microtype}
\usepackage[hidelinks]{hyperref}
\hypersetup{pdftitle={A Proof of the Analytic Entropy Conjecture},
  pdfauthor={Xiaoyang Chen and Xiang Jiang}}
\numberwithin{equation}{section}
\newtheorem{theorem}{Theorem}[section]
\newtheorem{lemma}[theorem]{Lemma}
\newtheorem{proposition}[theorem]{Proposition}
\newtheorem{corollary}[theorem]{Corollary}
\theoremstyle{definition}
\newtheorem{definition}[theorem]{Definition}

\theoremstyle{remark}

\newcommand{\R}{\mathbb R}
\newcommand{\Z}{\mathbb Z}
\newcommand{\Q}{\mathbb Q}
\newcommand{\T}{\mathbb T}

\newcommand{\cG}{\mathcal G}
\newcommand{\cR}{\mathcal R}
\newcommand{\cB}{\mathcal B}
\newcommand{\cC}{\mathcal C}
\DeclareMathOperator{\Crit}{Crit}
\DeclareMathOperator{\Exp}{Exp}
\DeclareMathOperator{\rank}{rank}
\DeclareMathOperator{\vol}{vol}
\DeclareMathOperator{\diam}{diam}
\DeclareMathOperator{\adj}{adj}
\DeclareMathOperator{\pr}{pr}
\newcommand{\htop}{h_{\mathrm{top}}}
\newcommand{\dd}{\,\mathrm d}
\setlist[enumerate]{leftmargin=*,itemsep=4pt,topsep=5pt}
\setlist[itemize]{leftmargin=*,itemsep=4pt,topsep=5pt}
\title{A Proof of the Analytic Entropy Conjecture}
\author{Xiaoyang Chen \qquad Xiang Jiang\\[5pt]
  \small School of Mathematical Science, Tongji University, Shanghai.}
\date{}

\begin{document}
\maketitle
\begin{abstract}
We prove that the geodesic flow of a closed real-analytic Riemannian
manifold has zero topological entropy if it admits globally defined,
real-analytic Liouville integrals. This proves a longstanding conjecture proposed by Bolsinov and Taimanov. 
\end{abstract}

\section{Introduction}\label{sec:intro}
Let $(M^n,g)$, $n\geq2$, be a closed connected real-analytic Riemannian
manifold. Here closed means compact without boundary, and real analyticity
is required of both the manifold and the metric. Write
$\pi:T^*M\to M$ for the bundle projection and set
\[
 E(q,p)=\tfrac12|p|_g^2,\qquad S=E^{-1}(1/2).
\]
Our conventions are $\lambda=p\,dq$, $\omega=-d\lambda$,
$\iota_{X_F}\omega=dF$, and $\{F,G\}=\omega(X_F,X_G)$.
The flow $\varphi_t$ of $X_E$ on $S$ projects
to unit-speed geodesics.

\begin{definition}[Analytic Liouville integrability]\label{def:integrability}
The geodesic flow is analytically Liouville integrable on $S$ if there
are globally defined, single-valued real-analytic functions
\[
 F_1=E,F_2,\ldots,F_n:W\longrightarrow\R
\]
on one open neighborhood $W$ of the entire $S$, such that
\begin{enumerate}[label=(A\arabic*)]
\item $\{F_i,F_j\}=0$ on $W$ for all $i,j$;
\item the ambient differentials $dF_1,\ldots,dF_n$ are linearly
independent on a dense subset of $S$.
\end{enumerate}
Independence is tested in $T_z^*(T^*M)$, rather than after restricting
the covectors to $T_zS$, on which $dE$ vanishes.
\end{definition}

\begin{theorem}[The analytic entropy conjecture]\label{thm:entropy}
Let $(M,g)$ satisfy Definition~\ref{def:integrability}. Then the
topological entropy of its unit-speed geodesic flow is zero:
\[
 \htop(g)=0.
\]
\end{theorem}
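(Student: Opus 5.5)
The plan is to show that all entropy is carried by the \emph{singular set} $\Sigma=\{z\in S:\ dF_1\wedge\cdots\wedge dF_n(z)=0\}$, and that analyticity keeps the flow on $\Sigma$ too tame to carry positive entropy.

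By the variational principle, $\htop(g)=\sup_\mu h_\mu(\varphi_1)$ over ergodic invariant probability measures $\mu$ on $S$, so it suffices to prove $h_\mu=0$ for each such $\mu$. The set $\Sigma$ is a compact, $\varphi_t$-invariant, real-analytic subset of $S$; invariance holds because the $F_i$ are first integrals and their Hamiltonian flows commute with $\varphi_t$. By (A2), $\Sigma$ is nowhere dense in $S$.

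If $\mu(S\setminus\Sigma)>0$, then ergodicity gives $\mu(S\setminus\Sigma)=1$. On $S\setminus\Sigma$ the Liouville--Arnold theorem applies locally. The fibres of $F=(F_1,\dots,F_n)$ through regular points are Lagrangian, and $\mathbb R^n$ acts on them by the commuting flows of $X_{F_i}$. The orbit of a regular point is an immersed $\mathbb R^k\times\T^{n-k}$, and $\varphi_t$ acts on it by translation. Ergodicity forces $\mu$ to live on the closure of such an orbit in a single fibre, where the dynamics is a translation, so $h_\mu=0$. To make this rigorous, I would use Ruelle's inequality: the Lyapunov exponents of $\mu$ vanish in the $n$ directions spanned by the $X_{F_i}$ and in the $n$ directions transverse to the level sets of $F$ (there $d\varphi_t$ is unipotent in action-angle charts). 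A Pesin-type argument then gives all exponents zero, hence $h_\mu=0$.

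The main obstacle is the case $\mu(\Sigma)=1$. I would stratify $\Sigma$ analytically and argue by induction on the rank $r(z)=\rank(dF_1,\dots,dF_n)(z)$. The sets $\Sigma_r=\{r(z)\le r\}$ are invariant analytic subvarieties, and by ergodicity $\mu$ gives full measure to some $\Sigma_r\setminus\Sigma_{r-1}$. After passing to a semianalytic stratification (Łojasiewicz), $\mu$ sits on an invariant stratum. On this stratum, the rank-$r$ family generates a locally free $\mathbb R^r$-action commuting with $\varphi_t$. The goal is to show that Lyapunov exponents vanish in the $r$ orbit directions and their symplectic duals, and to control the rest using functions constructed from the analytic dependences among the $dF_i$.

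What I would try first is Paternain's approach. There, entropy is bounded by the exponential growth rate of volumes of the image under $\varphi_t$ of a fibre; equivalently, via Mañé's formula, by the growth of lengths of geodesics joining pairs of points. The aim is to show that this growth is subexponential because $\Sigma$ has positive codimension. Concretely, the point is that Yomdin-type bounds on the entropy of analytic (hence $C^\infty$) maps restricted to the \emph{semianalytic} set $\Sigma$ reduce the local volume growth $v(\varphi)$ to growth along the tame analytic stratum. There the commuting action with the remaining integrals forces polynomial growth.

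I expect the hardest step to be establishing this polynomial growth on singular strata where the integrals degenerate. This is where real-analyticity, rather than mere smoothness, must genuinely be used: the known $C^\infty$ counterexamples with positive entropy show that smoothness alone is not enough.
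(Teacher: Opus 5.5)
There is a genuine gap, and it is the one you flag yourself. The case $\mu(\Sigma)=1$ is not proved, and it is the whole difficulty.

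Your regular case is fine, and easier than you make it. An ergodic $\mu$ carried by $S\setminus\Sigma$ lives on a single $\R^n$-orbit in a single fibre, on which $\varphi_t$ is a translation. It is therefore carried by a compact invariant torus on which the flow is linear, so $h_\mu=0$ without any Lyapunov or Pesin theory.

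On $\Sigma$, the structure you propose to exploit is equally available for $C^\infty$ integrals: the rank stratification, the locally free $\R^r$-action, and vanishing exponents along the orbits and their symplectic duals. In the smooth Bolsinov--Taimanov examples, positive entropy is carried precisely by such a singular stratum. On the invariant set of unit covectors conormal to the $\T^2$-fibres of a torus bundle with hyperbolic monodromy, the flow is the suspension of that hyperbolic automorphism. So nothing intrinsic to a stratum controls the remaining directions.

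The claim that ``the commuting action with the remaining integrals forces polynomial growth'' is therefore the unproved step, and its smooth analogue is false. The ``functions constructed from the analytic dependences among the $dF_i$'' are never constructed, and nothing shows they would integrate the flow on a stratum. Volume-growth bounds of Yomdin--Newhouse type do not help by themselves, since it is exactly the volume growth inside $\Sigma$ that you cannot estimate. Analyticity has to act through how the integrals behave near $\Sigma$, and your plan contains no mechanism for this.

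The paper avoids ergodic measures on $\Sigma$ altogether.
\begin{itemize}
\item Ma\~n\'e's formula computes $\htop(g)$, including whatever entropy $\Sigma$ carries, from the Lebesgue average over $(q,y)\in M\times M$ of the number of connecting geodesics.
\item The complement of finitely many regular torus families $B_\ell\times\T^n$, obtained by Hardt triviality, has dimension at most $2n-2$. Geodesics starting there join only a null set of endpoint pairs.
\item The problem thus becomes counting geodesics inside the regular families, with constants uniform all the way to the boundary of each $B_\ell$.
\end{itemize}
Analyticity supplies exactly this uniformity. A uniform component count for circle-cut time sets gives the linear winding bound $\|k\|_\infty\le C_0(1+s)$ (Lemmas~\ref{lem:circle} and~\ref{lem:winding}). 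O-minimality of the constructible endpoint equations bounds the number of solutions per winding label uniformly (Lemma~\ref{lem:multiplicity}). Together these give $N_{q,y}(T)\le C(1+T)^n$ on a full-measure set, hence $\htop(g)=0$.

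The decisive estimate is thus uniformity on the regular families up to their degeneration, not any analysis of the dynamics on $\Sigma$ itself. In the smooth examples, where entropy is positive, the same dimension count and Ma\~n\'e's formula show that this uniformity must fail.
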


This proves a longstanding conjecture proposed by Bolsinov and Taimanov.
The conjecture is recorded in \cite[\S7, Conjecture 2]{BTsusp}; see
\cite{BJ,Pbook} for background. The analytic surface case was proved
by Paternain \cite{P91}. In higher dimensions he proved zero entropy
under a nondegeneracy hypothesis \cite[Theorem 2.2]{P94}. Taimanov used tame
decompositions to obtain geometric simplicity and topological
restrictions \cite[Theorems 3--4]{Taimanov}. The argument below obtains
uniform geodesic counts without imposing nondegeneracy of the chosen
integrals. Smooth Liouville integrability alone is insufficient, even
for a real-analytic metric \cite{BTpositive}.

For $q,y\in M$ and $T>0$, define
\begin{equation}\label{eq:countdef}
 N_{q,y}(T)=\#\{\xi\in T_qM:0<|\xi|_g\leq T,\ \exp_q(\xi)=y\}.
\end{equation}
Distinct initial velocities, including different traversals of a closed
geodesic, are counted separately. Infinite counts are allowed. If
$z\in S$ and $s>0$, the vector $\xi=s z_g^\sharp$ has norm $s$ and
satisfies $\exp_{\pi z}(\xi)=\pi\varphi_s(z)$; $\sharp_g$ denotes the
metric identification of a covector with a tangent vector. Thus the
variable $s$ below is both the duration and the length of a unit-speed
geodesic segment.

\begin{theorem}[Uniform polynomial geodesic counting]\label{thm:count}
Under Definition~\ref{def:integrability}, there are a constant $C<\infty$
and a Borel set $\cG\subset M\times M$ of full product Riemannian measure
such that
\begin{equation}\label{eq:countbound}
 N_{q,y}(T)\leq C(1+T)^n
 \qquad ((q,y)\in\cG,\ T\geq1).
\end{equation}
Both $C$ and $\cG$ are independent of $T$, and $C$ is independent of
the endpoints.
\end{theorem}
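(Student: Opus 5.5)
We will count geodesics from $q$ to $y$ by turning the count into a count of solutions of an analytic system, and then bounding that count with a uniform Bezout-type estimate for analytic families. Set $F=(F_1,\dots,F_n):W\to\R^n$. By (A2) and analyticity, the degeneracy set $\Sigma=\{z\in S: dF_1\wedge\cdots\wedge dF_n(z)=0\}$ is a proper analytic subset of $S$, hence closed and nowhere dense. On $S\setminus\Sigma$, Liouville--Arnold applies locally. The commuting flows of $X_{F_1},\dots,X_{F_n}$ give a local $\R^n$-action whose orbits are the connected components of the level sets $F^{-1}(c)\cap S$. These orbits are $n$-dimensional, and the geodesic flow is a linear flow on each of them.

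The first step reduces everything to $z\in S_q^*M\setminus\Sigma$ with $\pi\varphi_s(z)=y$. Because $\Sigma$ is a proper analytic subset, it has dimension at most $2n-2$. We therefore take $\cG$ to be the set of pairs $(q,y)$ for which the geodesics from $q$ to $y$ meet $\Sigma$ only in a measure-zero exceptional way, that is, none of them is contained in $\Sigma$. We also require that $y$ be a regular value of $z\mapsto\pi\varphi_s(z)$ on $S_q^*M\times(0,\infty)$ in the appropriate sense. Sard's theorem together with a Fubini argument shows this set has full measure. The flow-out $\{\varphi_s(\Sigma)\}$ is the delicate part. We intend to handle it by stratifying $\Sigma$ into analytic strata that are invariant under all the $X_{F_i}$. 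The rank of the action is constant on each stratum, and this is exactly where we avoid assuming nondegeneracy.

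The second step is the counting itself, which splits into two sub-steps. First we show that $\#\{(z,s)\in S_q^*M\times(0,T]:\pi\varphi_s(z)=y\}$ is polynomial in $T$. Cover the compact set $S$ by finitely many analytic charts adapted to the stratification. In each chart the $\R^n$-action is given by analytic functions. A geodesic segment of length $T$ crosses at most $O(T)$ charts, since the speed is bounded. Within each chart, the intersection of the fibre $\pi^{-1}(y)$ with the orbit through $z$ is an analytic set. Its number of isolated points is bounded uniformly by a Gabrielov/Khovanskii-type uniform finiteness theorem for compact analytic families; on the regular set this is o-minimality of $\R_{an}$. Second, we assemble these local bounds. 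Along the linear flow in action-angle coordinates, the sequence of charts visited is governed by $n$ angle coordinates. The number of combinatorial chart sequences realised by endpoints in the fibre grows like the number of lattice cells that a segment of length $T$ in $\R^n$ meets. Counting those cells gives $O((1+T)^n)$ with a constant $C$ that depends only on the finite cover and the uniform bounds.

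The main obstacle is making the uniform bound hold across $\Sigma$ and through resonant tori. Near singular fibres, action-angle coordinates degenerate and the number of chart-crossings could in principle blow up. Our first approach would be to define everything through the $\R_{an}$-definable map $\Phi:(z,t_1,\dots,t_n)\mapsto \text{time-}t\text{ map of the commuting flows}$, restricted to $|t|\le1$ on the compact set $S$. Iterating this map at most $\lceil T\rceil$ times expresses the endpoint map as a composition. We would then bound the isolated zeros of $\pi\circ\varphi_s(z)=y$ by a Khovanskii-type fewnomial count that is polynomial in the number of compositions. The risk is that naive composition gives exponential rather than polynomial growth. To get the exponent $n$, we must exploit commutativity: the time-$s$ map depends only on the vector $(s,0,\dots,0)$ in the abelian group $\R^n$, and lattice-point counting in $\R^n$ should replace tree-growth.
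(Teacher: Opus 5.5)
\textbf{There is a genuine gap.} The step that makes the count polynomial in $T$, with a constant independent of $(q,y)$, is stated as a hope (``lattice-point counting in $\R^n$ should replace tree-growth''). That step is the heart of the theorem, and the chart-sequence argument does not supply it.

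\emph{The winding bound is missing.} One segment of length $T$ meets only $O(T)$ lattice cells, so the exponent $n$ cannot come from there. It has to come from counting the \emph{integer winding vectors} available to all geodesics from $q$. Those geodesics run on different tori, because the torus depends on the initial covector in $S_q^*M$. To compare winding across tori you need a common angular product. The paper obtains it from Hardt triviality for $f=(F_2,\ldots,F_n)|_S$, which gives finitely many globally subanalytic families $\tau_\ell:B_\ell\times\T^n\to U_\ell$.

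This also forces you to discard whole fibres over critical values, not only $\Crit(f)$. A regular point in a fibre component that meets $\Crit(f)$ has a noncompact orbit. So your claim that orbits in $S\setminus\Sigma$ are level-set components carrying a linear flow in action-angle coordinates fails there. (Once the discarded set has dimension at most $2n-2$, its flow-out over each $[0,m]$ has dimension at most $2n-1$. It therefore projects to a null set in $M\times M$, and no invariant stratification is needed.)

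You then need $\|k\|_\infty\leq C_0(1+s)$ with $C_0$ valid on each \emph{entire} family, including arbitrarily close to singular fibres, where action-angle coordinates and the period matrix blow up. Compact sets of regular values do not suffice. Even for one fixed pair, long geodesics use tori converging to singular ones: in the flat model, labels with $|k_1|\gg|k_2|$ force $b\to\pm1$. The paper's mechanism, which your plan lacks, is Lemma~\ref{lem:circle}. For the globally subanalytic family of orbit segments of duration at most one, the cut sets where an angular coordinate equals $1\in S^1$ have uniformly boundedly many components. This bounds every lifted angular displacement, and concatenating $\lceil s\rceil$ such segments gives the linear bound.

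\emph{The multiplicity per label is missing.} Nothing in the proposal bounds the number of geodesics sharing one label. Uniform finiteness in $\R_{\mathrm{an}}$ inside a single chart does not do it. The torus, the two angular endpoints, the length, and all intermediate chart-transition points are unknowns that vary with the geodesic.

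The paper encodes a connecting geodesic by the exact endpoint equation $A(b,v)-A(b,u)+P(b)k=se_1$, where $A$ comes from integrating the coframe dual to $X_1,\ldots,X_n$. These coefficients are constructible; they can contain logarithms, so they are not globally subanalytic. The uniform bound comes from o-minimality of $\R_{\mathrm{an},\exp}$, applied with $k$ replaced by a real parameter $\kappa$. This is done after nonconjugacy at good pairs has already made each relevant fibre finite.

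Your fallback, iterating a time-one map $\lceil T\rceil$ times and applying Khovanskii-type bounds, gives bounds exponential in the number of compositions, as you note yourself. Commutativity alone does not change that without the product structure and the uniform label bound described above.
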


Ma\~n\'e's formula turns this estimate into Theorem~\ref{thm:entropy}.
The same estimate also gives a topological consequence.

\begin{corollary}[Rational ellipticity]\label{cor:elliptic}
Under Definition~\ref{def:integrability}, if $M$ is simply connected,
then, for every integer $N\geq0$,
\begin{equation}\label{eq:betabound}
 \beta_M(N):=\sum_{j=0}^N\dim_{\Q}H_j(\Omega M;\Q)
 \leq C'(1+N)^n.
\end{equation}
Here $\Omega M$ is the based-loop space. Consequently $M$ is rationally
elliptic, meaning that $\sum_{j\geq2}\dim_{\Q}(\pi_j(M)\otimes\Q)<\infty$.
\end{corollary}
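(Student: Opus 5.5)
We prove the corollary from Theorem~\ref{thm:count} using Morse theory on the based path space, in the spirit of Gromov's argument and of Paternain's. The idea is to convert the uniform polynomial geodesic count into a polynomial bound on loop-space Betti numbers, and then apply the dichotomy theorem of rational homotopy theory.

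\textbf{Step 1: choice of endpoints.} Fix $(q,y)\in\cG$ such that $y$ is not conjugate to $q$ along any geodesic. This is possible because $\cG$ has full measure, and, by Sard's theorem applied to $\exp_q$, for almost every $q$ the set of $y$ conjugate to $q$ along some geodesic has measure zero. Fubini then shows that the set of non-conjugate pairs has full measure in $M\times M$, so it meets $\cG$.

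For such a pair, the energy functional on the space $\Omega_{q,y}M$ of $H^1$ paths from $q$ to $y$ is a Morse function satisfying Palais--Smale. Its critical points are exactly the geodesics from $q$ to $y$. Since $M$ is simply connected, $\Omega_{q,y}M$ is homotopy equivalent to $\Omega M$.

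\textbf{Step 2: the Morse inequalities.} The weak Morse inequalities give, for every level $a$ that is not a critical value,
\[
\sum_j \dim_{\Q} H_j(\Omega^{a}_{q,y};\Q)\le \#\{\text{geodesics of energy}<a\}.
\]
Here $\Omega^{a}_{q,y}$ is the sublevel set of the energy. To reach all homology up to degree $N$, we need a sublevel set $\{\text{length}\le L(N)\}$ whose homology surjects onto $H_{\le N}(\Omega M;\Q)$, with $L(N)$ depending linearly on $N$. This is Gromov's theorem: on a simply connected compact manifold, every class in $H_j(\Omega M)$ is represented by a cycle of loops of length at most $c_0\,j$. That theorem is stated for based loops, but it transfers to $\Omega_{q,y}M$ by concatenating with a fixed minimal geodesic from $q$ to $y$, which adds at most $\diam M$ to every length.

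\textbf{Step 3: the Betti bound.} Taking $T=c_0N+\diam M+1$ in \eqref{eq:countbound} gives
\[
\beta_M(N)\le N_{q,y}(T)\le C(1+c_0N+\diam M+1)^n\le C'(1+N)^n,
\]
which is \eqref{eq:betabound}. A geodesic of length $\ell$ corresponds to the vector $\xi$ with $|\xi|=\ell$, so the count in \eqref{eq:countdef} is exactly the number of critical points of length at most $T$.

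\textbf{Step 4: ellipticity.} By the Félix--Halperin--Thomas dichotomy, a simply connected finite CW complex $M$ is either rationally elliptic, or the Betti numbers of $\Omega M$ grow exponentially, meaning $\beta_M(N)\ge e^{cN}$ along a subsequence for some $c>0$. The polynomial bound excludes the second case, so $M$ is rationally elliptic.

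\textbf{Main obstacle.} The delicate step is Step 2: ensuring that the sublevel sets of length at most $c_0N$ carry all of $H_{\le N}(\Omega_{q,y}M;\Q)$ for the specific generic pair $(q,y)$, and that the Morse inequalities are available. For the former we rely on Gromov's linear-length representatives together with the endpoint-shift argument. The latter requires nondegeneracy, which Step 1 supplies, and it should be checked that $\cG$ can be intersected with the non-conjugate set without losing measure.
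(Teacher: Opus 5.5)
Your proposal is correct and follows the same route as the paper. You pick one pair in $\cG$, combine Gromov's linear length bound with the Morse inequalities to get $\beta_M(N)\le N_{q,y}(T)$ with $T$ linear in $N$, apply Theorem~\ref{thm:count}, and conclude with the F\'elix--Halperin--Thomas dichotomy; the differences are cosmetic: your Sard--Fubini argument in Step~1 is unnecessary because Lemma~\ref{lem:goodpairs}(i) already builds endpoint nonconjugacy into $\cG$, and the paper states the dichotomy for rational homotopy ranks and passes to loop-space Betti numbers via Milnor--Moore and Poincar\'e--Birkhoff--Witt, a step you absorb into your formulation of the dichotomy.
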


Simple connectivity is needed only for this corollary. The exponent $n$
in Theorem~\ref{thm:count} is sharp for the full class of manifolds, as
the flat-torus calculation in Section~\ref{sec:motivation-equation}
shows. 

\section{Proof strategy through the flat-torus model}\label{sec:outline}

\subsection{Why count connecting geodesics}
Ma\~n\'e's formula expresses the topological entropy of the geodesic
flow in terms of the average number of connecting geodesics:
\begin{equation}\label{eq:mane-outline}
 \htop(g)=\lim_{T\to\infty}\frac1T
 \log\int_{M\times M}N_{q,y}(T)\dd\vol_g(q)\dd\vol_g(y).
\end{equation}
A polynomial upper bound with a single constant for almost all endpoint
pairs makes this limit zero. The flat torus suggests both the exponent
and the way to obtain such a uniform estimate.

\subsection{Locate the geodesics that must be counted}
Write $f=(F_2,\ldots,F_n)|_S$. The commuting Hamiltonian fields
identify each connected regular fiber of $f$ with an invariant
Liouville torus. We will organize these tori into finitely many
product families and count geodesics within them.

This reduction holds for one Borel full-measure set
$\cG\subset M\times M$, independent of the length cutoff.
For $(q,y)\in\cG$, all connecting geodesics lie in the retained
families and are nonconjugate at their endpoints. Nonconjugacy
makes their initial vectors isolated in the fiber of
$\Exp(q,\xi)=(q,\exp_q\xi)$; compactness of bounded velocity
disks then makes $N_{q,y}(T)$ finite for each finite $T$.
Section~\ref{sec:regular} constructs $\cG$ by dimension estimates
and Sard's theorem. The task that remains is to make this finiteness
uniform in both endpoints and in $T$.

\subsection{The flat-torus model: geometry and counting}
\label{sec:motivation-equation}
The following calculation isolates the ingredients used in the
general proof: integer winding labels, invariant torus families,
an endpoint equation, and two uniform counting bounds.

\subsubsection{Start with a flat torus}\label{sec:flat-lattice}
On $\R^n/\Z^n$ with its standard flat metric, choose lifts
$\widetilde q,\widetilde y\in\R^n$. A connecting geodesic has
displacement $\widetilde y-\widetilde q+k$ for a unique
$k\in\Z^n$, so
\begin{equation}\label{eq:flatcount}
 N_{q,y}(T)=\#\{k\in\Z^n:0<|\widetilde y-\widetilde q+k|\leq T\}.
\end{equation}
Comparison with Euclidean balls gives growth of order $T^n$ as
$T\to\infty$. This explains the integer label and shows that the
exponent in Theorem~\ref{thm:count} is sharp for the full class of
manifolds considered there.

\subsubsection{A period matrix can diverge even in the flat example}
\label{sec:flat-divergence}
Specialize to $M=\R^2/\Z^2$ and write $[x]$ for reduction
modulo $\Z^2$. In $T^*M=M\times\R^2$, choose
\[
 F_1=E=\tfrac12(p_1^2+p_2^2),\qquad F_2=p_1.
\]
On the regular part of the unit energy surface, write
\[
 p=p_\varepsilon(b)=(b,p_2),\qquad
 p_2=\varepsilon\sqrt{1-b^2},\qquad
 b\in(-1,1),\quad \varepsilon\in\{+1,-1\}.
\]
Fix $\varepsilon$. Flowing for times $a_1,a_2$ along
$X_1=X_E$ and $X_2=X_{F_2}$ changes the lifted position by
$V(b)a$, where $a=(a_1,a_2)$ and
\[
 V(b)=\begin{pmatrix}b&1\\p_2&0\end{pmatrix},\qquad
 P(b)=V(b)^{-1}=\begin{pmatrix}0&1/p_2\\1&-b/p_2\end{pmatrix}.
\]
Thus $a=P(b)u$ converts angular displacement into flow times.
An integer angular loop $k\in\Z^2$ corresponds to the return
time vector $P(b)k$; the columns of $P(b)$ form a basis of the
full period lattice $P(b)\Z^2$.

For representatives $u,v\in[0,1)^2$, geodesic motion for time
$s$ reaches $v+k$ exactly when $u+sp=v+k$. Equivalently,
with $e_1=(1,0)$ denoting the first flow-time direction,
\begin{equation}\label{eq:flatequation}
 P(b)(v-u+k)=s e_1
 \quad\Longleftrightarrow\quad v-u+k=sp.
\end{equation}
As $b\to\pm1$, the matrix $P(b)$ diverges, although the geodesic
still has unit speed. A uniform bound on this matrix is therefore
unavailable even in this elementary model. We will instead bound
the angular winding vector directly.

\subsubsection{Torus families and the common parameter at the endpoints}
\label{sec:flat-families}
Here $S=M\times S^1$ and $f(q,p)=p_1$. For each
$b\in(-1,1)$, the regular fiber has two components
$L_{\varepsilon,b}=M\times\{p_\varepsilon(b)\}$. They form the
two product families
\[
 \tau_\varepsilon:(-1,1)\times\T^2\longrightarrow U_\varepsilon,
 \qquad \tau_\varepsilon(b,\theta)=(\theta,p_\varepsilon(b)),
 \qquad U_\varepsilon=\{(q,p)\in S:\varepsilon p_2>0\}.
\]
These are the flat instance of Proposition~\ref{prop:families}:
the family index distinguishes the two components with the same
integral value $b$. Such a torus lies in phase space, even though
in this example its projection to $M$ is a diffeomorphism.

Set $Z_\varepsilon(b,u)=\tau_\varepsilon(b,[u])$.
Since $\varphi_s([x],p)=([x+sp],p)$ preserves momentum, the
two endpoints of one lifted geodesic are
$Z_\varepsilon(b,u)$ and $Z_\varepsilon(b,v)$ with the same
$b=p_1$ and sign $\varepsilon$, while their base positions
$q=[u]$ and $y=[v]$ may differ. The value of $b$ can change
between different geodesics joining the same $q,y$.

The omitted set is $\Sigma=M\times\{(1,0),(-1,0)\}$ and
consists of horizontal trajectories. There are no conjugate
endpoints for the flat metric. Thus the good endpoint set used in
Lemma~\ref{lem:goodpairs} is simply
\[
 \cG=\{(q,y):q_2\neq y_2\text{ in }\R/\Z\}.
\]
It has full measure, and every connecting geodesic for these pairs
belongs to one of the two regular families, for every length cutoff.

\subsubsection{The coframe interpretation of the endpoint equation}
\label{sec:flat-coframe}
To generalize \eqref{eq:flatequation}, interpret $P(b)u$ as an
integral. On $L_{\varepsilon,b}$ the forms dual to $X_1,X_2$ are
\[
 \alpha_1=\frac{dq_2}{p_2},\qquad
 \alpha_2=dq_1-\frac{b}{p_2}\,dq_2,
 \qquad \alpha_i(X_j)=\delta_{ij}.
\]
Here $dq_j$ is the invariant one-form on the $j$th circle.
If a path has velocity $c_1X_1+c_2X_2$, its integral of
$\alpha_i$ records the accumulated $i$th flow time.
Along $\gamma_{b,u}(r)=([ru],p_\varepsilon(b))$, $0\leq r\leq1$,
this gives
\[
 A(b,u)=\left(\int_{\gamma_{b,u}}\alpha_1,
                   \int_{\gamma_{b,u}}\alpha_2\right)=P(b)u.
\]
In particular, $A(b,e_j)$ is the $j$th period column, since the
path to $e_j$ goes once around the $j$th angular circle.
Equation~\eqref{eq:flatequation} can therefore be written as
\[
 A(b,v)-A(b,u)+P(b)k=s e_1.
\]
This is Corollary~\ref{cor:endpoint} in the flat case. The point
of this reformulation is that integrating the dual coframe still
measures flow time when the angular coordinates no longer
linearize the flow.

\subsubsection{The two bounds behind lattice counting}
\label{sec:flat-solutions}\label{sec:flat-winding}
Fix $(q,y)\in\cG$ and their representatives $u,v\in[0,1)^2$.
First, every segment of length $s\leq T$ satisfies
\[
 k=sp+u-v,\qquad \|k\|_\infty\leq s+1\leq T+1,
 \qquad \|k\|_\infty=\max(|k_1|,|k_2|).
\]
Hence only $O(T^2)$ winding labels are possible, uniformly even
as $p_2\to0$ and $P(b)$ diverges.

Second, for a fixed label $k$, put $d_k=v-u+k$.
The unit-speed condition determines the only possible solution:
\[
 s=|d_k|,\qquad p=\frac{d_k}{|d_k|},\qquad
 b=\frac{(d_k)_1}{|d_k|}.
\]
The definition of $\cG$ ensures $(d_k)_2\neq0$, so $d_k\neq0$
and the sign of $(d_k)_2$ selects exactly one family. This solution
is counted if and only if $|d_k|\leq T$. Thus each label contributes
at most one geodesic across the two families. The formula for $b$
also shows directly why different labels can select different tori
for the same base endpoints.

The general proof retains this separation: bound the number of
possible integer labels, then bound the number of solutions for
each label. Neither step will require a bounded period matrix.

\subsection{From the flat model to the general case}
\label{sec:motivation-bounds}
Three features of the flat calculation require replacements:
the explicit torus products, the linear change to flow times,
and the explicit solution for each winding label.

\paragraph{Finite families replace the two explicit products.}
For $f=(F_2,\ldots,F_n)|_S$, Hardt's theorem organizes the
regular tori into finitely many products
$\tau_\ell:B_\ell\times\T^n\to U_\ell$, after omitting an
invariant set of dimension at most $2n-2$. Here $b\in B_\ell$
is the integral value, and $\ell$ also specifies a fiber component.
This extends the two signs in Section~\ref{sec:flat-families}.
Section~\ref{sec:regular} uses the dimension bound to exclude
only a null set of endpoint pairs and obtain the set $\cG$
described above.

\paragraph{Coframe integrals replace matrix multiplication.}
Fix one family $\tau:B\times\T^n\to U$ and suppress its index.
Hardt's map is a homeomorphism, so its angular coordinates need
not make the flow linear. In contrast, the times $a_i$ of the
commuting fields $X_i=X_{F_i}$ give a universal cover on which
geodesic motion is $a\mapsto a+s e_1$. The dual coframe converts
angular paths into these flow-time coordinates, just as in
Section~\ref{sec:flat-coframe}.

Write $\rho(u)=(e^{2\pi i u_1},\ldots,e^{2\pi i u_n})$ and
$\gamma_{b,u}(r)=\tau(b,\rho(ru))$, $0\leq r\leq1$.
For $u\in Q=[0,1]^n$, define
\begin{equation}\label{eq:outline-A}
 A_i(b,u)=\int_{\gamma_{b,u}}\alpha_i,\qquad
 A=(A_1,\ldots,A_n),\qquad
 P(b)=[\,A(b,e_1)\ \cdots\ A(b,e_n)\,].
\end{equation}
Section~\ref{sec:endpoint} justifies these integrals even though
the coordinate paths need not be smooth. It identifies the period
columns and proves the exact endpoint equation
\begin{equation}\label{eq:outline-endpoint}
 A(b,v)-A(b,u)+P(b)k=s e_1,\qquad k\in\Z^n.
\end{equation}
Only values on one angular cube are needed; long durations enter
through $s e_1$. The common $b$ at the two endpoints now follows
from conservation of the first integrals, which replaces constant
momentum in the flat model.

\paragraph{Uniform finiteness replaces the explicit estimates.}
The two bounds in Section~\ref{sec:flat-solutions} have distinct
generalizations.
\begin{enumerate}
\item \textbf{Bound the winding labels.}
Unit speed on $M$ does not directly control displacement in the
angular coordinates of $\tau$. Instead, follow one angular circle
coordinate during a segment of duration at most one and record
the times when it equals $1\in S^1$. Analyticity gives one
globally subanalytic family of such time sets; Lemma~\ref{lem:uniform}
bounds their numbers of connected components. A real lift can
attain only a uniformly bounded number of distinct integers, since
each needs a different component. Its displacement is therefore
bounded. Concatenating short segments gives
$\|k\|_\infty\leq C_0(1+s)$ and hence $O(T^n)$ labels.
Section~\ref{sec:winding} proves uniformity on each entire family,
including near singular fibers.

\item \textbf{Bound the solutions for one label.}
The projection of a Liouville torus to $M$ need not be a
diffeomorphism, so the base endpoints no longer determine their
angular representatives. For fixed $q,y,T,k$, solve for all of
$(b,u,v,s)$ with $u,v\in[0,1)^n$, imposing
\[
 \pi\tau(b,\rho(u))=q,\qquad \pi\tau(b,\rho(v))=y,
 \qquad 0<s\leq T,
\]
together with \eqref{eq:outline-endpoint}. For $(q,y)\in\cG$,
these solutions are finite because they correspond injectively to
actual connecting geodesics. To bound them uniformly, place every
integer label in one family of real equations by replacing $k$
temporarily with $\kappa\in\R^n$. Theorem~\ref{thm:integration}(i)
describes $A,P$ by globally subanalytic functions and logarithms;
part (ii) bounds all finite solution sets of this family by one
constant. Section~\ref{sec:multiplicity} verifies this application.
\end{enumerate}
Summing over the finitely many torus families gives
\[
 \underbrace{O(T^n)}_{\text{winding labels}}
 \ \times\ \underbrace{O(1)}_{\text{solutions per label and family}}
 \ =\ O(T^n).
\]

\subsection{The precise role of real analyticity}\label{sec:analyticity}
The replacements above use analyticity at three specific points.
\begin{enumerate}
\item \textbf{Organizing the regular tori.}
The analytic metric and first integrals give globally subanalytic
data on the compact unit energy surface. Finite decomposition and
Hardt's theorem supply finitely many torus products and control
the dimension of the omitted set; see Section~\ref{sec:regular}.
\item \textbf{Controlling short orbit segments.}
The analytic metric gives an analytic flow. On each fixed bounded
time interval it is globally subanalytic (Lemma~\ref{lem:flow}).
For durations at most one, this lets Lemma~\ref{lem:uniform}
bound the circle-coordinate time sets used in
Section~\ref{sec:winding}. No description of the entire
unrestricted-time flow is required.
\item \textbf{Controlling the endpoint equations.}
The analytic commuting fields give a dual coframe with globally
subanalytic coefficients on the regular region. Integration along
the coordinate paths yields the functions $A,P$ covered by
Theorem~\ref{thm:integration}. Sections~\ref{sec:endpoint} and
\ref{sec:multiplicity} verify absolute integrability and the
hypotheses of uniform finiteness, respectively.
\end{enumerate}
The first and third steps require analytic first integrals as well
as an analytic metric. Integration can introduce logarithms, so
one cannot assume that $A,P$ remain globally subanalytic.
Section~\ref{sec:ominimal} states the tools needed for precisely
these three steps; Section~\ref{sec:countproof} applies them.

\section{Real-analytic tools for the counting argument}\label{sec:ominimal}
The analytic hypotheses enter the proof through three concrete facts.
Analytic data on compact domains admit finite geometric decompositions;
analytic flows retain this property on a fixed bounded time interval;
and integrating the resulting functions introduces only a controlled
class of logarithmic terms. We state the needed results in that setting.
The order is: identify the analytic data, organize their fibers, and
then control the equations obtained by integration.

\subsection{Compact analytic data and flows on bounded time intervals}
The natural geometric language for these facts is \emph{global
subanalyticity}. Roughly, globally subanalytic sets are the sets obtained
from real-analytic data on compact domains by finite geometric
operations, including projection. A map is globally subanalytic when
its graph is such a set.\footnote{For a precise geometric definition,
a semianalytic set is locally a finite combination of real-analytic
equalities and inequalities; a subanalytic set is locally the projection
of a relatively compact semianalytic set. A set $A\subset\R^m$ is
globally subanalytic if its image under the coordinatewise map
$x_j\mapsto x_j/\sqrt{1+x_j^2}$ is subanalytic in the ambient $\R^m$,
including near the boundary of $[-1,1]^m$.}
The global condition controls behavior near the boundary of a domain
as well as at infinity. It allows $1/x$ on $x>0$, but excludes
$\sin(1/x)$ on $0<x<1$, whose zeros accumulate at the boundary.
Thus an unbounded coefficient is allowed; infinitely many oscillations
are the obstruction relevant here.

We use the following standard closure properties \cite{vdD}.
Finite unions, intersections, complements, products, and coordinate
projections preserve global subanalyticity. Consequently so do
composition, inversion of a bijection, and algebraic operations on
functions, with division restricted to a nonzero denominator.
Partial derivatives, on the sets where they exist, are globally
subanalytic as well. These statements apply to the whole specified
domain, even when it is not compact.

Fix real-analytic Euclidean embeddings of $M$ and $S$, using
\cite[Theorem 3]{Grauert}. An analytic map on a compact analytic
manifold has a compact subanalytic graph and is therefore globally
subanalytic. This applies to $f=(F_2,\ldots,F_n)|_S$ and $\pi|_S$.
These assertions concern restrictions to the compact manifold $S$;
analyticity on the possibly noncompact neighborhood $W$ alone would
not imply global subanalyticity on all of $W$.
For an embedding $\iota:S\hookrightarrow\R^a$, an analytic vector
field is represented by the analytic map
$z\mapsto d\iota_z(X(z))$ on compact $S$. In particular all the
fields $X_i$ have globally subanalytic Euclidean components.
Their Gram matrix can subsequently be inverted on its entire regular
locus, although the inverse may be unbounded there.

\begin{lemma}[Analytic flows on fixed bounded time intervals]\label{lem:flow}
Let $V$ be an analytic vector field on a compact analytic manifold $P$
without boundary, with flow $\psi_t$. For each fixed $R>0$, the map
\[
 [-R,R]\times P\longrightarrow P,\qquad (t,z)\longmapsto\psi_t(z)
\]
is globally subanalytic after an analytic Euclidean embedding of $P$.
\end{lemma}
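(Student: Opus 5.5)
The plan is to show that the graph
\[
\Gamma=\{(t,z,\psi_t(z)) : |t|\le R,\ z\in P\}
\]
is a compact real-analytic (hence semianalytic, hence subanalytic) subset, or at least a finite union of images of compact analytic sets under analytic maps, in the ambient Euclidean space $\R\times\R^a\times\R^a$. Since a compact subanalytic subset of Euclidean space is globally subanalytic (the coordinatewise map $x_j\mapsto x_j/\sqrt{1+x_j^2}$ is an analytic diffeomorphism onto its image and sends compact subanalytic sets to compact subanalytic sets, with nothing to check near the boundary of $[-1,1]^m$ because the set stays away from it), this gives the lemma.

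The first step is analyticity of the flow. Since $P$ is compact and $V$ is analytic, $\psi$ is a complete flow. By the Cauchy--Kovalevskaya or analytic dependence theorem for ODEs, the map $(t,z)\mapsto\psi_t(z)$ is real analytic on all of $\R\times P$. Local analyticity near $t=0$ is the standard theorem. Global analyticity for all $t$ follows from the group law $\psi_{t+t'}=\psi_t\circ\psi_{t'}$, because compositions of analytic maps are analytic. Hence the composite
\[
\Phi:\R\times P\to\R^a,\qquad \Phi(t,z)=\iota(\psi_t(z))
\]
is analytic on the analytic manifold $\R\times P$, where $\iota$ is the fixed analytic embedding.

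The second step is to realize the graph over the compact domain $[-R,R]\times P$ as the image of a compact set under an analytic map. Take the slightly larger open manifold $(-R-1,R+1)\times P$, on which $\Phi$ is analytic, and consider the analytic map $G(t,z)=(t,\iota(z),\Phi(t,z))$. The set $[-R,R]\times P$ is a compact semianalytic subset of this manifold. Its image under the proper restriction of the analytic map $G$ is subanalytic, since subanalytic sets are stable under proper analytic images (the standard fact, e.g. Bierstone--Milman, also covered by \cite{vdD}). That image is exactly $\Gamma$, and it is compact. Moreover $G$ is an injective immersion, since $\iota$ is an embedding, so $\Gamma$ is indeed the graph of the map in the statement, written in the embedded coordinates. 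By the remark above, $\Gamma$ is therefore globally subanalytic.

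The main point needing care is that the domain $\R\times P$ is not itself a Euclidean subset. I would handle this by embedding $(-R-1,R+1)\times P$ analytically into $\R^{1+a}$ via $(t,z)\mapsto(t,\iota(z))$ and using that $\iota(P)$ is a compact analytic submanifold. Then $[-R,R]\times\iota(P)$ is compact semianalytic in $\R^{1+a}$. The analytic map $\Phi\circ(\mathrm{id}\times\iota^{-1})$ extends analytically to a neighborhood by a tubular-neighborhood retraction, which is analytic by Grauert's theorem \cite{Grauert}. Its graph over the compact set is then the proper projection of a compact semianalytic set and so is subanalytic. Beyond this bookkeeping, the only genuine input is global analyticity in $t$, and it is exactly the restriction to bounded $|t|\le R$ that keeps the set compact. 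Over all of $\R$ the graph would typically fail to be globally subanalytic, because of infinite oscillation at infinity.
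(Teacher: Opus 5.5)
Your proposal is correct and follows the paper's proof: compactness gives completeness, analytic dependence gives an analytic flow on $\R\times P$, and the graph over the compact set $[-R,R]\times P$ is then compact and subanalytic, hence globally subanalytic. You also fill in details the paper leaves implicit, namely the group-law extension of local analyticity, the proper-image theorem, and why a compact subanalytic set is globally subanalytic; your closing tubular-neighborhood paragraph is redundant once the proper-image step is in place.
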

\begin{proof}
Compactness makes $V$ complete. Analytic dependence for ordinary
differential equations makes the flow map analytic on $\R\times P$.
Its restriction to $[-R,R]\times P$ has a compact subanalytic graph,
which proves the assertion.
\end{proof}

The time interval must be fixed before this lemma is applied.
For example, $t\mapsto e^{2\pi it}$ has the asserted property on
every bounded interval, whereas its unrestricted graph has infinitely
many intersections with $\R\times\{1\}$. In Section~\ref{sec:regular}
we apply the lemma separately with $R=1,2,\ldots$ to exclude a
countable union of null endpoint sets. In Section~\ref{sec:winding}
only $R=1$ is used; longer orbit segments are then concatenated.

\subsection{Finite decompositions of analytic families}
\paragraph{Dimension and sets of measure zero.}
Globally subanalytic sets admit finite partitions into smooth pieces.
We also use the one-variable monotonicity theorem: a globally subanalytic
real-valued function on an interval is $C^1$ and either constant or
strictly monotone on each open interval of a finite partition
\cite{vdD}. Endpoint derivatives need not be bounded. This statement
will justify the path integrals in Lemma~\ref{lem:AC}.
The dimension of a set is the largest dimension of its smooth pieces. For a
globally subanalytic map $h:X\to Y$ and a globally subanalytic
subset $A\subset Y$,
\begin{equation}\label{eq:dimension}
 \dim h(X)\leq\dim X,\qquad
 \dim h^{-1}(A)\leq\dim A+r
\end{equation}
if the fibers over $A$ have dimension at most $r$.
A globally subanalytic subset of dimension less than $d$ in a smooth
$d$-manifold has zero $d$-dimensional measure, by its finite smooth
partition. Section~\ref{sec:regular} applies these facts first to the
omitted torus fibers and then to the endpoint pairs reached from them.

\paragraph{Hardt triviality: finitely many product families.}
If $h:X\to Y$ is continuous and globally subanalytic, its image has
a finite partition into globally subanalytic pieces $B_\nu$ such that
\begin{equation}\label{eq:hardt}
 H_\nu:B_\nu\times K_\nu\longrightarrow h^{-1}(B_\nu),
 \qquad h(H_\nu(b,x))=b,
\end{equation}
where $H_\nu$ is a globally subanalytic homeomorphism and $K_\nu$
is one fixed model fiber. This is Hardt's triviality theorem;
see \cite[Theorem 4.2]{Basu}. The formulation for a general map
follows from the projection formulation by applying it to the graph
of $h$; properness is not required. The pieces can be refined into smooth
cells. A cell here is simply a piece of the finite decomposition
homeomorphic to an open box of its dimension. A full-dimensional
cell in $\R^d$ is open in $\R^d$.

The assertion means that, over one parameter piece, an entire fiber
can be identified continuously with the same model $K_\nu$.
For example, for $h(x_1,x_2)=x_1^2+x_2^2$, the fibers over $b>0$
are circles, and
\[
 H:(0,\infty)\times S^1\longrightarrow\R^2\setminus\{0\},
 \qquad H(b,\theta)=\sqrt b\,\theta
\]
is such a product description. The fiber at $b=0$ is treated separately.
For our map $f:S\to\R^{n-1}$, the connected regular fibers are
tori. Hardt's theorem organizes them into finitely many products
$B_\ell\times\T^n$, as shown in Section~\ref{sec:regular}.
The resulting coordinates are continuous; smoothness and bounds on
their derivatives near a singular value are not part of the theorem.

\begin{lemma}[Uniform bounds for subanalytic families]\label{lem:uniform}
Let $\mathcal P\subset\R^d$ and
$\mathcal Z\subset\mathcal P\times\R^a$ be globally subanalytic.
For $p\in\mathcal P$, write
$\mathcal Z_p=\{x:(p,x)\in\mathcal Z\}$.
There is one integer $K<\infty$ such that every $\mathcal Z_p$ has
at most $K$ connected components. In particular, every finite
$\mathcal Z_p$ has at most $K$ points.
\end{lemma}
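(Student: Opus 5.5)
The plan is to deduce the statement from Hardt's triviality theorem \eqref{eq:hardt}, applied to the coordinate projection $h:\mathcal Z\to\R^d$, $h(p,x)=p$. Although the lemma does not assume continuity of any map, $h$ is the restriction of a linear projection and is therefore continuous. Its graph is globally subanalytic, being the image of $\mathcal Z$ under the map $(p,x)\mapsto(p,x,p)$. Hardt's theorem then partitions the image $h(\mathcal Z)\subset\mathcal P$ into finitely many globally subanalytic pieces $B_1,\ldots,B_m$. Over each piece we get a homeomorphism $H_\nu:B_\nu\times K_\nu\to h^{-1}(B_\nu)$ with $h(H_\nu(b,x))=b$.

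For $p\in B_\nu$, the map $x\mapsto H_\nu(p,x)$ is a homeomorphism from $K_\nu$ onto $h^{-1}(p)=\{p\}\times\mathcal Z_p$. So $\mathcal Z_p$ is homeomorphic to the fixed model fiber $K_\nu$ and has exactly as many connected components as $K_\nu$. For $p\notin h(\mathcal Z)$ the fiber $\mathcal Z_p$ is empty, with zero components.

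It remains to check that each $K_\nu$ has finitely many connected components. The model fiber can be taken globally subanalytic, for instance one of the actual fibers $\{b_0\}\times\mathcal Z_{b_0}$, which is the intersection of $\mathcal Z$ with a subanalytic slice. A globally subanalytic set admits a finite partition into smooth cells, each connected. Hence $K_\nu$ has at most as many components as cells. Setting $K=\max_\nu \#\pi_0(K_\nu)$ over the finitely many $\nu$ gives the uniform bound.

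Finally, a finite set has as many connected components as points, which gives the ``in particular'' clause.

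The only delicate point is the bookkeeping in the application of Hardt's theorem: that the trivialization preserves fibers exactly, and that the model fiber is itself globally subanalytic so that the finite cell decomposition applies. If one prefers to avoid the model fiber's subanalyticity, an alternative is to cite directly the uniform finiteness theorem for definable families in the o-minimal structure $\R_{\mathrm{an}}$ \cite{vdD}. That theorem bounds the number of cells in a decomposition of $\mathcal Z$ compatible with the projection, and hence the number of cells in every fiber, each cell being connected.
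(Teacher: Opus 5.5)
Your proposal is correct and is essentially the paper's own argument. You apply Hardt's theorem to the parameter projection, identify each fiber over a piece with a single model fiber that has finitely many components by finite decomposition, take the maximum over the finitely many pieces (empty fibers count as zero), and read off the point count for finite fibers; your extra checks on continuity, the graph, and the subanalyticity of the model fiber are harmless added detail.
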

\begin{proof}
Apply Hardt's theorem to the parameter projection
$\mathcal Z\to\mathcal P$. Over each of finitely many parameter
pieces all fibers are homeomorphic to one model. Each model has
finitely many connected components by the finite decomposition
property. Take the maximum of these finitely many numbers; empty
fibers contribute zero. The components of a finite set are its points.
\end{proof}

The hypothesis concerns the whole family in the combined variables
$(p,x)$. Separate analytic descriptions for each fixed $p$ would not
give a common bound. Neither compactness of the parameter domain nor
finiteness of all fibers is required. Section~\ref{sec:winding} uses
this lemma for the times when a short angular orbit meets a fixed
point of a circle. After integration, a version allowing logarithms
will be needed for the endpoint equations.

\subsection{Integration, logarithms, and uniform solution counts}
An integral of globally subanalytic functions need not remain
globally subanalytic. Already
\[
 \int_1^x\frac{dr}{r}=\log x\qquad(x\geq1)
\]
leaves that class. The appropriate enlargement for this proof consists
of functions of the form
\begin{equation}\label{eq:constructible}
 C(x)=\sum_{\nu=1}^N g_\nu(x)
              \prod_{j=1}^{r_\nu}\log h_{\nu j}(x),
 \qquad h_{\nu j}(x)>0,
\end{equation}
where all $g_\nu$ and $h_{\nu j}$ are globally subanalytic on the
specified domain. These are called \emph{constructible functions}
in the integration literature \cite{CM}. An empty product is $1$,
so globally subanalytic functions are included. Sums, products, and
composition with a globally subanalytic map preserve this class.

\begin{theorem}[Integration and finiteness for the resulting equations]
\label{thm:integration}
\leavevmode
\begin{enumerate}[label=(\roman*)]
\item Let $X$ be globally subanalytic and let
$a:X\times\R\to\R$ be globally subanalytic. If
$a(x,\cdot)$ is absolutely Lebesgue integrable for every $x\in X$,
then
\[
 I(x)=\int_{\R}a(x,r)\dd r
\]
is constructible on $X$.
\item Let $D\subset\R^d\times\R^a$ be globally subanalytic,
with variables $(p,x)$. Suppose $\mathcal Z\subset D$ is described
by finitely many conditions $C(p,x)=0$ or $C(p,x)>0$, where each
$C$ is constructible on $D$, with finite unions, intersections, or
complements relative to $D$ allowed. There is one integer $K<\infty$ such that
every solution set $\mathcal Z_p$ has at most $K$ connected
components. Every finite solution set therefore has at most $K$ points.
\end{enumerate}
\end{theorem}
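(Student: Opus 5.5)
This theorem is a packaging of two known results from the theory of constructible functions, so I would derive it by citation plus a reduction. For part (i) I would invoke the parametric integration theorem of Lion--Rolin, in the sharper form of Comte--Lion--Rolin and Cluckers--Miller \cite{CM}. The constructible functions \eqref{eq:constructible} form the smallest class containing the globally subanalytic functions and their logarithms that is closed under parametric integration. Concretely, the steps are as follows.

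\begin{enumerate}
\item Cell-decompose $X\times\R$ compatibly with $a$. Over each cell, after a globally subanalytic change of the variable $r$, the Lion--Rolin preparation theorem writes $a(x,r)$ as a finite sum of terms $c(x)\,|r-\theta(x)|^{q}\,U(x,r)$. Here $q\in\Q$ and $U$ is a unit given by a convergent power series in bounded globally subanalytic quantities.
\item Integrate each prepared term in $r$. Exponent $q\neq-1$ gives power terms. Exponent $q=-1$ produces $\log$ of a globally subanalytic difference of cell boundaries. The unit contributes a convergent series whose sum is again globally subanalytic.
\item Check that absolute integrability of $a(x,\cdot)$ for every $x$ is exactly what forces the nonintegrable terms to cancel. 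Then the integral over $\R$ is a well-defined sum of finitely many constructible pieces.
\end{enumerate}

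I would not reprove this. I would cite \cite{CM}, whose main theorem states precisely that $x\mapsto\int a(x,r)\,dr$ is constructible on the locus of integrability.

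For part (ii) my plan is to remove the logarithms by introducing new variables, which puts us back into an o-minimal structure. The key observation is that everything is definable in $\R_{\mathrm{an},\exp}$, which is o-minimal by van den Dries--Macintyre--Marker. Each $h_{\nu j}$ is globally subanalytic, hence $\R_{\mathrm{an}}$-definable, and $\log$ is definable in $\R_{\mathrm{an},\exp}$. So every constructible $C$ on $D$ is definable there, and $\mathcal Z$ is a definable set. Hardt triviality holds in every o-minimal structure. The argument of Lemma~\ref{lem:uniform} then applies verbatim to the projection $\mathcal Z\to\R^d$: it gives finitely many parameter pieces with homeomorphic fibers, each with finitely many components, and the maximum of these counts is the bound $K$.

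If one prefers to stay within subanalytic geometry, there is an alternative. Fix $p$ and introduce new variables $t_{\nu j}$ constrained by $e^{t_{\nu j}}=h_{\nu j}(p,x)$. Then apply Khovanskii's fewnomial/Pfaffian bounds together with the uniformity for subanalytic families. This route is messier, and I would use it only as a fallback.

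The main obstacle is (i), which carries all the analytic content. In particular, the handling of absolute integrability near the boundaries of the cells, where the integrand may be unbounded, is the delicate part of \cite{CM}. Part (ii) reduces at once to o-minimality of $\R_{\mathrm{an},\exp}$. The one point to verify there is that the finitely many Boolean operations relative to $D$ preserve definability, which is immediate.
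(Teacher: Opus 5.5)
Your proposal is correct and follows essentially the same route as the paper: (i) is the special case of Cluckers--Miller's integration theorem on the locus of integrability, and (ii) comes from o-minimality of $\R_{\mathrm{an},\exp}$ (van den Dries--Macintyre--Marker) combined with a uniform finiteness principle. The paper phrases that principle through cylindrical cell decomposition with the parameters first, whereas you use definable Hardt triviality; the two are interchangeable here. Treat your preparation-theorem sketch for (i) as motivation only, since it is somewhat loose: expanding the unit can itself produce logarithmic terms.
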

\begin{proof}[Sources and justification]
Part (i) is the required special case of Cluckers--Miller's integration
theorem \cite[Theorem 1.3]{CM}. It also follows from the volume theorem
of Comte--Lion--Rolin \cite{CLR}: the positive and negative parts
$a^\pm$ give globally subanalytic subgraph families
\[
 \{(r,t):0<t<a^\pm(x,r)\}.
\]
Their finite areas are constructible functions of $x$. Their difference
is the stated integral by Tonelli's theorem.

Part (ii) uses the o-minimality theorem for restricted real-analytic
functions and exponentiation \cite[Corollary 5.13]{DMM}, together
with cylindrical cell decomposition \cite{vdD}. Here restricted analytic
functions are analytic functions used on compact boxes to which they
extend analytically; logarithms are included as inverses of the
exponential. The consequence needed here is that the combined solution
set has a finite cylindrical cell decomposition with the parameter coordinates
first. At fixed $p$, each cell contributes either nothing or one
connected cell. The total number of cells therefore bounds the number
of connected components of every solution set. If the solution set
is finite, each nonempty cell fiber is a point. Every function in
\eqref{eq:constructible} belongs to the analytic and logarithmic
setting of that theorem, which verifies its applicability.
\end{proof}

In part (ii), the bound applies to the number of connected components
of every fiber, including infinite fibers. Applying it to a finite
fiber gives a point count because each of its components is a point.
Thus no restriction to a separately described set of finite fibers
is required.

The two parts have different roles and different sources.
Part (i) controls the functions produced by integration; part (ii)
controls the number of solutions of equations involving those functions.
In particular, part (ii) is not an assertion that those solution sets
are globally subanalytic.

For part (i), integrability is checked separately at each parameter;
no common $L^1$ bound is required. In Section~\ref{sec:endpoint},
the parameter is $x=(b,u)$ and the variable $r\in[0,1]$ traces
a path on one compact regular torus. Extending the integrand by
zero outside $[0,1]$ gives the theorem's formulation on $\R$.
Its output is $A(b,u)$, and evaluation at $u=e_j$ gives the columns
of $P(b)$. Both have constructible entries even if they diverge
near singular fibers.

In Section~\ref{sec:multiplicity}, part (ii) applies to the equations
for $(b,u,v,s)$ with real parameters $(q,y,T,\kappa)$.
After obtaining one bound for every finite solution set, we use it
at the good endpoints and integer values $\kappa=k$.
No description of the selected parameters by analytic equations is
required. This order is essential: the integers themselves do not
belong to the finite geometric setting above.

\subsection{The tools as used in the proof}
\begin{center}
\small
\begin{tabular}{@{}>{\raggedright\arraybackslash}p{0.25\textwidth}
  >{\raggedright\arraybackslash}p{0.40\textwidth}
  >{\raggedright\arraybackslash}p{0.25\textwidth}@{}}
\toprule
Input & Consequence & Use\\
\midrule
Analytic metric and integrals & Compact subanalytic data and finite torus products
 & Hardt's theorem, \S\ref{sec:regular}\\[4pt]
Lemma~\ref{lem:flow} & Subanalytic flow maps on fixed time intervals
 & Null endpoint sets and short orbits, \S\ref{sec:regular}, \S\ref{sec:winding}\\[4pt]
Lemma~\ref{lem:uniform} & Uniformly many components of circle-cut time sets
 & Linear winding bound, \S\ref{sec:winding}\\[4pt]
Theorem~\ref{thm:integration}(i) & Constructible entries of $A$ and $P$
 & Exact endpoint equation, \S\ref{sec:endpoint}\\[4pt]
Theorem~\ref{thm:integration}(ii) & Uniformly bounded finite solution sets
 & Multiplicity for one label, \S\ref{sec:multiplicity}\\
\bottomrule
\end{tabular}
\end{center}
\section{Proof of the polynomial counting theorem}\label{sec:countproof}
We now implement Section~\ref{sec:outline}. First choose finite torus
families and good endpoints. Next express a geodesic by an endpoint
equation whose coefficients are controlled by Theorem~\ref{thm:integration}(i).
Finally bound the number of winding labels using Lemma~\ref{lem:uniform},
and the multiplicity of each label using Theorem~\ref{thm:integration}(ii).

Section~\ref{sec:motivation-equation} develops the flat-torus model
that motivates the invariant families, endpoint equation, and two
counting bounds. Section~\ref{sec:motivation-bounds} explains the
changes needed in general. We now prove those steps, referring back
to the model where it clarifies the construction.

\subsection{Regular tori and a fixed set of good endpoints}\label{sec:regular}
Throughout this section,
\[
 f=(F_2,\ldots,F_n)|_S:S\to\R^{n-1},\qquad
 X_i=X_{F_i}|_S\quad(1\leq i\leq n).
\]
Poisson commutation makes the fields $X_i$ tangent to $S$, preserving
$f$, and mutually commuting. Compactness of $S$ makes them complete.
Since $dE\neq0$ on $S$, a relation among
$dF_2|_{T_zS},\ldots,dF_n|_{T_zS}$ holds precisely when the
corresponding ambient linear combination is a multiple of $dE(z)$.
Thus ambient independence of $dE,dF_2,\ldots,dF_n$ is equivalent
to $\rank df(z)=n-1$. Nondegeneracy of $\omega$ makes it equivalent
to independence of $X_1(z),\ldots,X_n(z)$ as well.

\begin{samepage}
\subsubsection{Finitely many full families of Liouville tori}
\begin{proposition}[Finite torus families]\label{prop:families}
There are finitely many globally subanalytic homeomorphisms
\begin{equation}\label{eq:families}
 \tau_\ell:B_\ell\times\T^n\longrightarrow U_\ell,
 \qquad f(\tau_\ell(b,\theta))=b,\qquad 1\leq\ell\leq L,
\end{equation}
where each $B_\ell\subset\R^{n-1}$ is a cell of dimension $n-1$
consisting of regular values with nonempty fibers. The sets $U_\ell$
are disjoint open subsets of $S$, made up of whole connected fiber
components, and invariant under every $X_i$. Their complement
\begin{equation}\label{eq:sigma}
 \Sigma=S\setminus\bigcup_{\ell=1}^L U_\ell
 \quad\text{is closed and satisfies}\quad \dim\Sigma\leq2n-2.
\end{equation}
The constants later obtained on these products apply on all of
$B_\ell$, without restricting to compact subsets.
\end{proposition}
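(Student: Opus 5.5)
The plan is to apply Hardt's theorem \eqref{eq:hardt} to the continuous globally subanalytic map $f:S\to\R^{n-1}$ and then work only over the top-dimensional part of the regular value set. Let $\Crit\subset S$ be the closed set where $\rank df<n-1$. By the discussion preceding the proposition, this is exactly where $X_1,\dots,X_n$ fail to be independent. It is globally subanalytic, being defined by vanishing of minors of the analytic map $df$ on compact $S$. Hypothesis (A2) makes $S\setminus\Crit$ dense, hence $\dim\Crit\leq 2n-2$. The critical values $f(\Crit)$ then have dimension at most $n-1$. This is not yet good enough, so I will refine the partition rather than try to prove $f(\Crit)$ is small.

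First, refine the Hardt partition of $f(S)$ into cells compatible with $f(\Crit)$. Let $B'_\nu$ be the open $(n-1)$-cells disjoint from $f(\Crit)$. Over such a cell, every fiber is a compact regular level set of $f$ in $S$, hence a compact $n$-manifold on which the $X_i$ are independent and commute. By the Liouville--Arnold theorem, each connected component is an $n$-torus.

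The model fiber $K_\nu$ of the homeomorphism $H_\nu:B'_\nu\times K_\nu\to f^{-1}(B'_\nu)$ is homeomorphic to a fiber, hence to a finite disjoint union of tori. Its components can be listed as $K_\nu^1,\dots,K_\nu^{m_\nu}$; these are finitely many by finite decomposition. Choose a globally subanalytic homeomorphism $\T^n\to K_\nu^j$. For example, take the identity model after replacing $K_\nu$ by an actual fiber $f^{-1}(b_0)$ with its analytic torus structure, transported via $H_\nu(b_0,\cdot)$. Composing gives $\tau_\ell:B_\ell\times\T^n\to U_\ell$, with $\ell=(\nu,j)$ and $B_\ell=B'_\nu$. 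Each $U_\ell=H_\nu(B'_\nu\times K_\nu^j)$ is a union of whole fiber components. It is open, because $f^{-1}(B'_\nu)$ is open, $H_\nu$ is a homeomorphism, and $K^j_\nu$ is open in $K_\nu$. It is invariant under every $X_i$, since the flows preserve $f$ and connected fiber components. The sets $U_\ell$ are disjoint because distinct cells are disjoint.

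For the dimension bound, $\Sigma$ is contained in $f^{-1}(Y)$, where $Y$ is the union of the cells of dimension at most $n-2$ together with $(n-1)$-cells meeting $f(\Crit)$. The key point is the following. Since each cell is either contained in or disjoint from $f(\Crit)$, an $(n-1)$-cell contained in $f(\Crit)$ contributes a preimage consisting of regular and critical points. I need the regular part there to still have dimension at most $2n-2$.

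This is the main obstacle. An open set of values might be both critical and regular, with the regular part of its preimage $n$-dimensional fibers over an $(n-1)$-dimensional set, i.e.\ dimension $2n-1$.

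My first attempt is to instead set $Y'=f(\Crit)\cup\{\text{low cells}\}$ and show $\dim f(\Crit)\leq n-2$ via a Sard-type statement for globally subanalytic maps: the critical values of a $C^1$ subanalytic map have dimension less than the target. This holds by \eqref{eq:dimension} applied to a smooth stratification of $\Crit$, using that $f$ restricted to each stratum has rank below $n-1$ there. Then every $(n-1)$-cell can be taken disjoint from $f(\Crit)$. Consequently $\Sigma\subset f^{-1}(Y')$. The fibers of $f$ have dimension at most $n$, so generic fibers have dimension $n$ and \eqref{eq:dimension} gives $\dim\Sigma\leq(n-2)+n=2n-2$. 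One caveat remains: a fiber over a singular value could have dimension bigger than $n$. To handle this I would bound $\dim(\Crit\cap f^{-1}(Y'))\leq\dim\Crit\leq 2n-2$ separately, and apply \eqref{eq:dimension} only to the regular points over $Y'$, whose fibers are $n$-dimensional.

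Closedness of $\Sigma$ follows since the $U_\ell$ are open.
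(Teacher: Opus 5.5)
Your proposal is correct and follows the paper's proof. It uses Hardt triviality over the $(n-1)$-dimensional regular-value cells, transports an analytic torus identification from one reference fiber, and gets $\dim\Sigma\le 2n-2$ from $\dim\Crit(f)\le 2n-2$, $\dim f(\Crit(f))\le n-2$, and the $n$-dimensional regular fibers over the omitted values. The only soft spot is your justification of $\dim f(\Crit(f))\le n-2$: it does not follow from \eqref{eq:dimension} alone. As the paper does, use Sard's theorem to see that $f(\Crit(f))$ is null, then note that a null globally subanalytic subset of $\R^{n-1}$ has no $(n-1)$-dimensional piece.
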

\end{samepage}
\begin{proof}
\emph{Control the fibers over critical values.}
Let $D=\Crit(f)$ be the critical locus and $\Delta=f(D)$ the set of
critical values. The rank condition is analytic, and the regular locus
is dense, so $D$ is a proper analytic set of dimension at most $2n-2$.
Both $D$ and $\Delta$ are compact and globally subanalytic. By Sard's
theorem $\Delta$ has measure zero. Its subanalytic stratification
therefore has no $(n-1)$-dimensional piece, so $\dim\Delta\leq n-2$.
On $S\setminus D$, the map $f$ is a submersion with $n$-dimensional
fibers. The fiber dimension estimate \eqref{eq:dimension} gives
\[
 \dim\bigl(f^{-1}(\Delta)\setminus D\bigr)
 \leq n+\dim\Delta\leq2n-2.
\]
Together with the bound for $D$, this yields
\begin{equation}\label{eq:badfiberdim}
 \dim f^{-1}(\Delta)\leq2n-2.
\end{equation}
This step removes entire fibers over critical values, not only their
critical points; a regular point can lie over a critical value.

\emph{Identify each regular connected component as a torus.}
For a regular value $b$, a connected component $L_b$ of $f^{-1}(b)$
is a compact $n$-manifold. The fields $X_1,\ldots,X_n$ are tangent
to it and form a basis at every point. Their commuting complete flows
define an $\R^n$-action. Every orbit is open, so connectedness makes
the action transitive. The stabilizer $\Lambda_b$ of a point is closed
and discrete, the latter because the fields are independent. The orbit map induces a
bijective local analytic diffeomorphism $\R^n/\Lambda_b\to L_b$,
hence an analytic diffeomorphism. Its quotient is therefore compact,
which forces $\Lambda_b$ to be a lattice of rank $n$. This identifies
$L_b$ analytically with an $n$-torus.

\emph{Use Hardt's theorem to choose subanalytic product families.}
Apply \eqref{eq:hardt} to $f$ over $f(S)\setminus\Delta$, refining
its finite partition into cells. On one cell $B$, write its product
homeomorphism as $H:B\times K\to f^{-1}(B)$ and set
$H_b(x)=H(b,x)$. Fix $b_0\in B$. For each connected component
$K_0$ of $K$, the manifold $L_0=H_{b_0}(K_0)$ is a regular torus.
Choose a basis of its period lattice and the resulting analytic
identification $j_0:\T^n\to L_0$. Its graph is compact and globally
subanalytic. Then
\[
 \tau(b,\theta)=H_b\bigl(H_{b_0}^{-1}(j_0(\theta))\bigr)
\]
is a globally subanalytic homeomorphism from $B\times\T^n$ onto
the corresponding family of connected components. There are finitely
many components and cells, so this gives finitely many products.
The analytic identification is chosen on one fixed fiber only; Hardt's
homeomorphism transports it topologically to the others.

\emph{Retain full-dimensional cells and estimate what was omitted.}
Keep the cells of dimension $n-1$. Each omitted regular-value cell
has dimension at most $n-2$, so its inverse image has dimension at
most $n+(n-2)=2n-2$. Along with \eqref{eq:badfiberdim}, this proves
the dimension estimate in \eqref{eq:sigma}.
A full-dimensional cell is open in $\R^{n-1}$. Each component of
the compact reference fiber is open and closed in that fiber because
there are only finitely many components. The product homeomorphism
therefore makes each retained $U_\ell$ open in $f^{-1}(B_\ell)$,
which is itself open in $S$.
Thus $\Sigma$ is closed. Every $X_i$ preserves
$f$, and its continuous orbit stays in the same connected component
of a fiber. Hence all the sets $U_\ell$ and $\Sigma$ are invariant.
\end{proof}

For the explicit two families on the flat two-torus, including their
component indices and exceptional set, see Section~\ref{sec:flat-families}.

\subsubsection{One endpoint set for every length}
\begin{samepage}
\begin{lemma}[Good endpoint pairs]\label{lem:goodpairs}
There is a Borel full-measure set $\cG\subset M\times M$, independent
of any length bound, such that for every $(q,y)\in\cG$:
\begin{enumerate}[label=(\roman*)]
\item all geodesics from $q$ to $y$ are nonconjugate at their endpoints;
\item $N_{q,y}(T)<\infty$ for every finite $T$;
\item every connecting geodesic, after unit-speed normalization,
starts in one of the $U_\ell$.
\end{enumerate}
\end{lemma}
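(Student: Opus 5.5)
We construct $\cG$ as the complement of a countable union of null sets, one for each integer length bound $R=1,2,\ldots$. The set is then independent of $T$ because we exhaust all lengths at once. For fixed $R$, consider the map
\[
 \Phi_R:[0,R]\times S\longrightarrow M\times M,\qquad \Phi_R(s,z)=(\pi z,\pi\varphi_s(z)).
\]
By Lemma~\ref{lem:flow} it is globally subanalytic. A geodesic from $q$ to $y$ of length at most $R$ is exactly a point $(s,z)$ with $s\in(0,R]$ and $\Phi_R(s,z)=(q,y)$.

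\emph{Step 1: condition (iii).} Restrict $\Phi_R$ to $[0,R]\times\Sigma$. By Proposition~\ref{prop:families}, $\dim\Sigma\leq 2n-2$, so the domain has dimension at most $2n-1$. The first inequality of \eqref{eq:dimension} then bounds the image dimension by $2n-1<2n=\dim(M\times M)$. Hence the image $E_R^\Sigma$ is a globally subanalytic null set. It is compact, hence Borel. Since $\Sigma$ is invariant, it does not matter whether we look at the initial or the terminal covector. Any pair outside $E_R^\Sigma$ has all its connecting geodesics of length $\le R$ starting in $\bigcup U_\ell$.

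\emph{Step 2: condition (i).} Let $C_R\subset(0,R]\times S$ be the set where $(s,z)$ is a critical point of $\Phi_R$, that is, where $d\Phi_R$ has rank $<2n$. Both spaces have dimension $2n$ (namely $1+(2n-1)$ and $2n$). This is the conjugacy condition: the differential of $(s,z)\mapsto(\pi z,\pi\varphi_s z)$ is surjective exactly when $\varphi_s(z)$ is not conjugate to $z$ along the geodesic. To see this, fix $q=\pi z$ and restrict to $s\xi$ with $\xi$ in the unit sphere of $T_qM$; this gives $\Exp$ in polar coordinates, and polar coordinates are a local diffeomorphism away from $s=0$. Sard's theorem shows that $\Phi_R(C_R)$ has measure zero. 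It is also globally subanalytic: the critical set is defined by minors of the analytic Jacobian, and we take its closure in $[0,R]\times S$ with $s\ge\delta$ handled by countable exhaustion, or simply its compact closure. Together with $C_R$ we also remove $\Phi_R(\{0\}\times S)$, the diagonal, which is null.

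\emph{Step 3: condition (ii).} For $(q,y)$ outside these sets, the fiber $\Phi_R^{-1}(q,y)\cap\{s>0\}$ consists of regular points of the map between equidimensional manifolds, so it is discrete. It also lies in $[\epsilon,R]\times S$ for some $\epsilon>0$, because the diagonal was removed and short geodesics from $q$ cannot return to $q$: injectivity radius gives $s\ge d(q,y)>0$. This fiber is a closed subset of the compact space $[0,R]\times S$ that is discrete and stays away from $s=0$, so it is finite. This shows $N_{q,y}(R)<\infty$.

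Finally, set $\cG=(M\times M)\setminus\bigcup_{R\ge1}\bigl(E^\Sigma_R\cup\overline{\Phi_R(C_R)}\cup\Delta_M\bigr)$. It is Borel and of full measure, and every $T$ lies below some integer $R$.

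\textbf{Main obstacle.} We expect the main difficulty to be Step~2. One must ensure that the Sard-null critical-value set is measurable and genuinely null in the Borel sense; taking the closure of $\Phi_R(C_R)$ keeps it null by subanalyticity, since its dimension is less than $2n$. One must also ensure that the accumulation of critical points as $s\to0$ does not spoil finiteness; removing the diagonal together with the distance bound $s\ge d(q,y)$ handles this.
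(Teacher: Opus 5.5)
Your proposal is correct and follows essentially the paper's route. Both arguments use the same dimension count on $\Sigma\times[0,R]$ for (iii), Sard's theorem for the conjugate-point critical values, removal of the diagonal, and compactness plus discreteness for the finiteness in (ii). The differences are cosmetic. You work with the polar form $\Phi_R$ of the exponential map, so you must discard $s=0$ separately and obtain Borel measurability from the closure of a lower-dimensional subanalytic image. The paper applies Sard directly to $\Exp$ on $TM$, where the zero section is regular, and obtains Borel measurability from $\sigma$-compactness over the integer-radius disk bundles.
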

\end{samepage}
\begin{proof}
For each integer $m\geq1$, consider
\begin{equation}\label{eq:exceptionalmap}
 \Psi_m:\Sigma\times[0,m]\longrightarrow M\times M,
 \qquad \Psi_m(z,s)=(\pi z,\pi\varphi_s(z)).
\end{equation}
Lemma~\ref{lem:flow} makes this map globally subanalytic for this
fixed $m$. Its domain has dimension at most $2n-1$, so its image
has measure zero in $M\times M$. Since $\Sigma$ is compact, the
image is compact. Consequently
\[
 \cB=\bigcup_{m=1}^{\infty}\Psi_m(\Sigma\times[0,m])
\]
is Borel and null. Outside $\cB$, no finite positive-time trajectory
from $\Sigma$ connects the two endpoints. This is the first use of
the fixed-time flow lemma; a countable union suffices here.

Next consider the global exponential endpoint map
\begin{equation}\label{eq:Exp}
 \Exp:TM\longrightarrow M\times M,\qquad
 \Exp(q,\xi)=(q,\exp_q\xi).
\end{equation}
In local bundle coordinates its derivative has block form
\[
 d\Exp_{(q,\xi)}=
 \begin{pmatrix}I&0\\ *&D_\xi\exp_q\end{pmatrix}.
\]
Thus $(q,\xi)$ is regular precisely when the vertical derivative
$D_\xi\exp_q$ is invertible. For $\xi\neq0$, this is equivalent
to the two endpoints being nonconjugate along the corresponding
geodesic \cite[Theorem 18.1]{Milnor}; it does not exclude conjugate
points in the interior of the segment. At $\xi=0$ the derivative
is the identity.
The critical values form a null set $\cC$ by Sard's theorem. They
are Borel: the critical locus is closed, and its intersections with
integer-radius compact disk bundles have compact images whose union
is $\cC$. Define
\begin{equation}\label{eq:goodset}
 \cG=(M\times M)\setminus
 \bigl(\cB\cup\cC\cup\{(q,q):q\in M\}\bigr).
\end{equation}
This is the required Borel set of full measure.

For $(q,y)\in\cG$, every point of $\Exp^{-1}(q,y)$ is regular.
The inverse function theorem makes this fiber discrete, since the
source and target of $\Exp$ have the same dimension. It is also
closed. Its intersection with a compact velocity disk is compact and
discrete, hence finite. Removing the diagonal excludes the zero
velocity; indeed every connecting length is at least $d_g(q,y)>0$.
The exclusion of $\cB$ places every normalized initial covector in
one of the retained families.
\end{proof}

The unit round sphere explains the need for the exceptional set. All
meridians from a point to its antipode are geodesics of length $\pi$,
so there are infinitely many such segments. Their endpoints are
conjugate, and the pair is removed in \eqref{eq:goodset}. The proof
controls the endpoint average; it makes no assertion that discarding
an invariant null set preserves topological entropy.

In the flat model, this construction is the condition $q_2\neq y_2$
from Section~\ref{sec:flat-families}. The argument above replaces
that coordinate description by dimension estimates and Sard's theorem.

\subsection{The endpoint equation in flow-time coordinates}\label{sec:endpoint}
Fix a single family $\tau:B\times\T^n\to U$, and suppress its index
$\ell$ until Section~\ref{sec:multiplicity}. We construct functions
that express a long geodesic segment using its endpoints, length, and
integer winding vector. To keep their roles explicit, the notation
for this construction is collected here.
\begin{center}
\small
\begin{tabular}{@{}>{\raggedright\arraybackslash}p{0.22\textwidth}
  >{\raggedright\arraybackslash}p{0.70\textwidth}@{}}
\toprule
Symbol & Meaning\\
\midrule
$b\in B$ & Value of $f$; labels the torus $L_b=\tau(\{b\}\times\T^n)$.\\[3pt]
$u\in\R^n$ & Lifted angular coordinate, with quotient $\rho(u)\in\T^n$.\\[3pt]
$a\in\R^n$ & Times of the commuting flows $X_1,\ldots,X_n$.\\[3pt]
$Q$, $Q_0$ & $[0,1]^n$ for integration; $[0,1)^n$ for unique representatives.\\[3pt]
$Z(b,u)$, $z_b$ & $\tau(b,\rho(u))$ and the reference point $Z(b,0)$.\\[3pt]
$A(b,u)$ & Flow-time displacement along the chosen path from $z_b$ to $Z(b,u)$.\\[3pt]
$P(b)$ & Matrix of flow-time displacements of the $n$ basic angular loops.\\[3pt]
$s$, $k$ & Geodesic duration and length; integer angular winding vector.\\
\bottomrule
\end{tabular}
\end{center}

\subsubsection{A coframe that measures the commuting flow times}
Use the Euclidean embedding of $S$ fixed in Section~\ref{sec:ominimal}.
On $S^{\mathrm{reg}}=S\setminus\Crit(f)$, define the Gram matrix
$\Gamma$ and the one-forms $\alpha_i$ by
\begin{equation}\label{eq:coframe}
 \Gamma_{ij}(z)=\langle X_i(z),X_j(z)\rangle,\qquad
 \alpha_i|_z(w)=\sum_{j=1}^n(\Gamma(z)^{-1})_{ij}
                         \langle X_j(z),w\rangle,
 \quad w\in T_zS.
\end{equation}
The brackets here are the ambient Euclidean inner product, not the
metric $g$ on $M$. Independence of the fields makes $\Gamma$ positive
definite. Its entries are globally subanalytic, and
$\Gamma^{-1}=\adj(\Gamma)/\det\Gamma$ is globally subanalytic on
the whole set $\det\Gamma>0$. Thus the forms in \eqref{eq:coframe}
have analytic, globally subanalytic coefficients even though that
domain is not compact.

By multiplication of the two matrices,
$\alpha_i(X_j)=\delta_{ij}$. On a fixed torus $L_b$, the $X_j$
span the tangent space, so these restrictions form its dual coframe.
Commutation gives
\[
 d\alpha_i(X_j,X_k)
 =X_j\bigl(\alpha_i(X_k)\bigr)-X_k\bigl(\alpha_i(X_j)\bigr)
       -\alpha_i([X_j,X_k])=0.
\]
Hence
\begin{equation}\label{eq:closed}
 d(\alpha_i|_{L_b})=0.
\end{equation}
Only closedness along a torus is asserted. These forms are bounded
on each fixed compact $L_b$, but the bound may depend on $b$.

\subsubsection{Why the coordinate paths can be integrated}
The map $\tau$ is only a homeomorphism. The paths obtained from it
need not be smooth, so we record the exact regularity used in the
line integrals and the later lifting argument. A Euclidean path is
absolutely continuous when it is the integral of an $L^1$ derivative
plus its initial value; such paths have finite length and satisfy the
usual chain rule for smooth maps locally with bounded derivative.

\begin{samepage}
\begin{lemma}[Regularity of the paths and their lifts]\label{lem:AC}
\leavevmode
\begin{enumerate}[label=(\roman*)]
\item A continuous globally subanalytic path
$\gamma:[0,1]\to\R^a$ is absolutely continuous and has finite length.
\item Let $p:\widetilde L\to L$ be a smooth covering that is a local
diffeomorphism. If $\gamma:[0,1]\to L$ is absolutely continuous,
every continuous lift $\widetilde\gamma$ is absolutely continuous and
\[
 dp_{\widetilde\gamma(r)}\widetilde\gamma'(r)=\gamma'(r)
 \quad\text{for almost every }r.
\]
Absolute continuity on manifolds is understood in smooth local coordinates.
\end{enumerate}
\end{lemma}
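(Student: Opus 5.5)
For part (i), I will apply the one-variable monotonicity theorem to each Euclidean coordinate $\gamma_j:[0,1]\to\R$ of $\gamma$. The graph of $\gamma_j$ is a projection of the graph of $\gamma$, so $\gamma_j$ is globally subanalytic. Monotonicity gives a finite partition $0=r_0<r_1<\dots<r_N=1$ such that on each open interval $(r_{m-1},r_m)$ every $\gamma_j$ is $C^1$ and either constant or strictly monotone; taking a common refinement makes one partition work for all coordinates.

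On a closed interval $[r_{m-1},r_m]$, a continuous monotone function that is $C^1$ in the interior is absolutely continuous. Indeed, for $[c,d]\subset(r_{m-1},r_m)$ the fundamental theorem of calculus gives $\gamma_j(d)-\gamma_j(c)=\int_c^d\gamma_j'$. Since $\gamma_j'$ has constant sign, monotone convergence together with continuity at the endpoints lets $c\downarrow r_{m-1}$ and $d\uparrow r_m$. This yields $\gamma_j'\in L^1$ and the integral formula on the closed interval, even when $\gamma_j'$ is unbounded near the endpoints. Gluing finitely many absolutely continuous pieces at the points $r_m$, where $\gamma$ is continuous, gives absolute continuity on $[0,1]$. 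The length is then $\int_0^1|\gamma'|\le\sum_j\int_0^1|\gamma_j'|$, which is finite; each summand equals the total variation of $\gamma_j$, namely $\sum_m|\gamma_j(r_m)-\gamma_j(r_{m-1})|$.

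For part (ii), I will use compactness of $[0,1]$ and continuity of $\widetilde\gamma$. Choose a partition $0=t_0<\dots<t_K=1$ so fine that each $\widetilde\gamma([t_{m-1},t_m])$ lies in an open set $\widetilde V$ on which $p$ restricts to a diffeomorphism onto an open $V\subset L$. Shrinking $V$ to a relatively compact coordinate ball with a slightly larger chart domain, the local inverse $\sigma=(p|_{\widetilde V})^{-1}$ becomes a smooth map with bounded derivative in coordinates, and on that subinterval $\widetilde\gamma=\sigma\circ\gamma$. The composition of an absolutely continuous path with a map that is $C^1$ with bounded derivative, hence Lipschitz on a convex coordinate neighbourhood, is absolutely continuous. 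The chain rule holds almost everywhere: $\widetilde\gamma'(r)=d\sigma_{\gamma(r)}\gamma'(r)$ at every point where $\gamma$ is differentiable. Applying $dp$ and using $p\circ\sigma=\mathrm{id}$ gives $dp\,\widetilde\gamma'=\gamma'$ almost everywhere. Finitely many subintervals glue together as in (i).

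The main obstacle is the coordinate bookkeeping for absolute continuity on manifolds. The Lipschitz property of $\sigma$ must be taken on a convex coordinate region that contains the relevant image, and the covering $\widetilde L$ (for instance $\R^n$ over a torus) is noncompact. The compactness argument above handles this, because only the compact image $\widetilde\gamma([0,1])$ matters. In (i), the only subtlety is the possibly unbounded derivative at partition points, which monotonicity resolves.
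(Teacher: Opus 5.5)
Your proposal is correct and follows the paper's proof essentially step for step. In (i) you use the monotonicity partition, the fundamental theorem of calculus on compact subintervals with the endpoint limits justified by the fixed sign of the derivative, and finite gluing. In (ii) you use Lipschitz inverse branches on small coordinate balls together with a compactness subdivision of the lifted path.

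One point of order in (ii) needs tidying. Choose the small relatively compact balls, with their bounded-derivative inverse branches, before taking the subdivision, as the paper does. If you shrink $V$ after fixing the partition, a path piece could leave the shrunken ball.
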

\end{samepage}
\begin{proof}
For (i), the one-variable decomposition theorem for globally
subanalytic functions gives a finite partition on whose open intervals
every coordinate of $\gamma$ is $C^1$ and monotone. Let $h$ be one coordinate on a
partition interval $(a,b)$, with its continuous endpoint values.
The fundamental theorem of calculus on compact subintervals, followed
by passage to the endpoints, gives
\[
 \int_a^b|h'(r)|\dd r=|h(b)-h(a)|<\infty,\qquad
 h(x)-h(a)=\int_a^x h'(r)\dd r.
\]
The fixed sign of $h'$ justifies these limits. Thus $h$ is absolutely
continuous on $[a,b]$. Finite gluing proves absolute continuity of
all coordinates on $[0,1]$. The estimate
$|\gamma'|\leq\sum_j|\gamma_j'|$ also gives finite length.

For (ii), cover the compact lifted path by inverse-branch neighborhoods
for $p$. In local coordinates, choose smaller balls with closures in
larger balls on which each inverse branch is smooth. Its derivative
is bounded on a smaller closed ball, so the inverse branch is
Lipschitz there. Continuity and compactness give a finite subdivision
of the parameter interval such that each path piece lies in one of
these neighborhoods. On each piece the lift is a smooth Lipschitz
inverse branch composed with $\gamma$, and is therefore absolutely
continuous. The ordinary chain rule holds almost everywhere on each
piece. Finite gluing proves the claim. All constants are chosen for
this individual lifted path.
\end{proof}

Identify $\T^n=(S^1)^n$ and let
\[
 \rho:\R^n\to\T^n,\qquad
 \rho(u)=(e^{2\pi i u_1},\ldots,e^{2\pi i u_n}),\qquad
 Q=[0,1]^n,\quad Q_0=[0,1)^n.
\]
The full covering map $\rho$ is used topologically. Its restriction
to $Q$ extends analytically to a neighborhood of that cube and is
therefore globally subanalytic. Consequently
\[
 Z:B\times Q\to U,\qquad Z(b,u)=\tau(b,\rho(u)),
 \qquad z_b=Z(b,0)
\]
is globally subanalytic. For $(b,u)\in B\times Q$, set
\begin{equation}\label{eq:Adefinition}
 \gamma_{b,u}(r)=Z(b,ru)\quad(0\leq r\leq1),\qquad
 A_i(b,u)=\int_0^1
       \alpha_i|_{\gamma_{b,u}(r)}\bigl(\gamma_{b,u}'(r)\bigr)\dd r.
\end{equation}
The derivative is interpreted almost everywhere. The path begins at
$z_b$ and stays in the fixed torus $L_b$.
The closed cube includes the vertices $e_j$, which will be used to
define periods. The half-open cube will be used later only to remove
ambiguity in representatives.

\begin{lemma}[Constructible path integrals]\label{lem:Adefinable}
Every integral in \eqref{eq:Adefinition} converges absolutely.
Each component of $A=(A_1,\ldots,A_n):B\times Q\to\R^n$ is a
constructible function of the combined variables $(b,u)$, in the
sense of \eqref{eq:constructible}.
\end{lemma}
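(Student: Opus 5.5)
The plan is to reduce both assertions to Theorem~\ref{thm:integration}(i). Define the integrand
\[
 a_i(b,u,r)=\begin{cases}\alpha_i|_{\gamma_{b,u}(r)}\bigl(\partial_r\gamma_{b,u}(r)\bigr), & 0<r<1 \text{ where the derivative exists},\\ 0,&\text{otherwise}.\end{cases}
\]
This is a function on $(B\times Q)\times\R$. Three things have to be checked: that $a_i$ is globally subanalytic in the combined variables $(b,u,r)$, that its integral in $r$ equals $A_i(b,u)$, and that $a_i(b,u,\cdot)$ is absolutely integrable for every fixed $(b,u)$.

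\emph{Definability.} The map $(b,u,r)\mapsto\gamma_{b,u}(r)=Z(b,ru)$ is globally subanalytic, because $Z$ is and $(u,r)\mapsto ru$ is polynomial on a bounded set. Its partial derivative in $r$ is globally subanalytic on the set where it exists, by the closure properties of Section~\ref{sec:ominimal}. The coefficients of $\alpha_i$ in \eqref{eq:coframe} are globally subanalytic on $S^{\mathrm{reg}}$, which contains $U$. Composing and taking the Euclidean inner product therefore shows that $a_i$ is globally subanalytic. The exceptional set where the derivative fails to exist is a definable set, and it has measure zero in $r$ for each fixed $(b,u)$, which is what integration requires.

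\emph{Absolute convergence at fixed $(b,u)$.} By Lemma~\ref{lem:AC}(i), the path $r\mapsto\gamma_{b,u}(r)$ is a continuous globally subanalytic path. It is therefore absolutely continuous, and $\int_0^1|\gamma'|\dd r<\infty$. The path stays in the compact torus $L_b\subset S^{\mathrm{reg}}$, and the coefficients of $\alpha_i$ are continuous there, so they are bounded by a constant $c_b$. Hence $|a_i|\le c_b|\gamma'|$, which is integrable, and the integral agrees with \eqref{eq:Adefinition}. Theorem~\ref{thm:integration}(i) then gives that $A_i$ is constructible on $B\times Q$.

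\emph{Main obstacle.} The delicate point is that the bound $c_b$ is not uniform in $b$, and neither is the length of $\gamma_{b,u}$. One might worry that this is an issue, but Theorem~\ref{thm:integration}(i) demands integrability only parameterwise. So no uniform estimate near singular fibers is needed. What must be verified carefully is only that the almost-everywhere derivative coincides with the definable partial derivative, so that the a.e.\ chain-rule interpretation of \eqref{eq:Adefinition} is the same integral that Theorem~\ref{thm:integration}(i) sees.
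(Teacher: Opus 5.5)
Your proposal is correct and follows the paper's proof essentially step for step: the zero-extended integrand is globally subanalytic by closure under partial derivatives and composition with the coframe coefficients, and absolute integrability is checked parameterwise via Lemma~\ref{lem:AC}(i) and compactness of $L_b\subset S^{\mathrm{reg}}$. Theorem~\ref{thm:integration}(i) is then applied with $X=B\times Q$; the point you flag as needing care is automatic, since $\gamma_{b,u}'(r)$ is by definition the $r$-partial derivative of $(b,u,r)\mapsto Z(b,ru)$, and for each $(b,u)$ the exceptional set is finite by the monotonicity theorem.
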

\begin{proof}
\emph{Absolute integrability for every parameter.}
Fix $(b,u)$. The path $\gamma_{b,u}$ is continuous and globally
subanalytic, so Lemma~\ref{lem:AC}(i) makes it absolutely continuous
with finite length. Its derivative is tangent to $L_b$ almost
everywhere, as follows by differentiating the vanishing normal
coordinates in a submanifold chart. Since $L_b$ is compact and
$\alpha_i$ is smooth there,
some finite $C_{i,b}$ satisfies
\begin{equation}\label{eq:L1}
 \int_0^1\left|\alpha_i|_{\gamma_{b,u}(r)}
                       (\gamma_{b,u}'(r))\right|\dd r
 \leq C_{i,b}\int_0^1|\gamma_{b,u}'(r)|\dd r<\infty.
\end{equation}
No common bound in $b$ or $u$ is required by
Theorem~\ref{thm:integration}(i).

\emph{A single globally subanalytic integrand.}
At points with $0<r<1$ where the partial derivative in $r$ exists, put
\begin{equation}\label{eq:integrand}
 \mathfrak a_i(b,u,r)=\sum_{j=1}^n
  (\Gamma(\gamma_{b,u}(r))^{-1})_{ij}
  \left\langle X_j(\gamma_{b,u}(r)),\partial_r\gamma_{b,u}(r)\right\rangle.
\end{equation}
Set $\mathfrak a_i=0$ at every other point, including outside $(0,1)$.
The closure property for partial derivatives from
Section~\ref{sec:ominimal} applies to this jointly globally subanalytic
path family. More explicitly, existence and value of this derivative
are expressed by the usual limit condition on difference quotients,
with real quantifiers. Closure under projection and complement makes
its domain and graph globally subanalytic, jointly in $(b,u,r)$.
Combining this derivative with the coframe coefficients and extending
by zero makes $\mathfrak a_i$ globally subanalytic on
$B\times Q\times\R$.
For each fixed $(b,u)$, the path is piecewise $C^1$ with only finitely
many exceptional times. Assigning zero at those times does not alter
its integral. Globally subanalytic functions are measurable by their finite
decomposition into continuous pieces.

\emph{Apply the integration theorem with the correct parameters.}
In Theorem~\ref{thm:integration}(i), take $X=B\times Q$, the parameter
$x=(b,u)$, and the integration variable $r$. Equation~\eqref{eq:L1}
verifies the required hypothesis for every $x$. Hence
\[
 A_i(b,u)=\int_{\R}\mathfrak a_i(b,u,r)\dd r
\]
is constructible, as claimed.
\end{proof}

This is the only step that invokes integration in
Theorem~\ref{thm:integration}(i). The derivative is taken only along
the path variable $r$; neither smoothness of $\tau$ in all its variables
nor differentiation of the integral with respect to $b$ is needed.

\subsubsection{Compare the two universal covers and identify the periods}
The integral just defined must still be related to actual geodesic
motion. This is a covering-space argument for each fixed $b$.

\begin{lemma}[Developing coordinates and the full period lattice]
\label{lem:developing}
For each $b\in B$, there is a homeomorphism
$h_b:\R^n\to\R^n$, with $h_b(0)=0$, such that
\begin{equation}\label{eq:hA}
 h_b(u)=A(b,u)\qquad(u\in Q).
\end{equation}
The matrix with constructible entries
\begin{equation}\label{eq:Pdefinition}
 P(b)=[\,A(b,e_1)\ \cdots\ A(b,e_n)\,]
\end{equation}
is invertible, its columns form a basis of the full flow-time period
lattice, and
\begin{equation}\label{eq:equivariance}
 h_b(u+k)=h_b(u)+P(b)k
 \qquad(u\in\R^n,\ k\in\Z^n).
\end{equation}
In flow-time coordinates $a=h_b(u)$, the lifted geodesic flow is
$a\mapsto a+s e_1$.
\end{lemma}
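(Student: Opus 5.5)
The plan is to compare two universal covers of the fixed torus $L_b$: the angular cover $\rho$ composed with $\tau_b=\tau(b,\cdot)$, namely $\widetilde Z_b:\R^n\to L_b$, $u\mapsto\tau(b,\rho(u))$, and the flow-time cover $\Phi_b:\R^n\to L_b$, $\Phi_b(a)=\phi^{X_1}_{a_1}\circ\cdots\circ\phi^{X_n}_{a_n}(z_b)$. By the torus identification in the proof of Proposition~\ref{prop:families}, $\Phi_b$ is an analytic covering with deck group the rank-$n$ lattice $\Lambda_b$, and it satisfies $\Phi_b^*\alpha_i=da_i$. This holds because $d\Phi_b(\partial_{a_j})=X_j$ by commutation, and $\alpha_i(X_j)=\delta_{ij}$. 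Since $\R^n$ is simply connected and $\widetilde Z_b$ is continuous with $\widetilde Z_b(0)=z_b=\Phi_b(0)$, I will take $h_b:\R^n\to\R^n$ to be the unique continuous lift with $\Phi_b\circ h_b=\widetilde Z_b$ and $h_b(0)=0$. Both maps are coverings of $L_b$ by simply connected spaces, so $h_b$ is a homeomorphism: it is the isomorphism of universal covers.

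To prove \eqref{eq:hA}, fix $u\in Q$. The path $\gamma_{b,u}$ is absolutely continuous by Lemma~\ref{lem:AC}(i), and $r\mapsto h_b(ru)$ is its continuous $\Phi_b$-lift starting at $0$. By Lemma~\ref{lem:AC}(ii) this lift is absolutely continuous and $d\Phi_b$ carries its derivative to $\gamma_{b,u}'$ almost everywhere. Then
\[
 \alpha_i(\gamma_{b,u}'(r))=(\Phi_b^*\alpha_i)\bigl(\tfrac{d}{dr}h_b(ru)\bigr)=\tfrac{d}{dr}\,h_b(ru)_i
\]
almost everywhere. Integrating over $[0,1]$ with absolute continuity gives $A_i(b,u)=h_b(u)_i$. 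The constructibility of $P(b)$ is then immediate from Lemma~\ref{lem:Adefinable}.

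For equivariance, the deck groups must correspond. For $k\in\Z^n$, both $u\mapsto h_b(u+k)$ and $u\mapsto h_b(u)$ are lifts of $\widetilde Z_b$, so they differ by a deck transformation of $\Phi_b$. Deck transformations of $\Phi_b$ are translations by elements of $\Lambda_b$, because the $\R^n$-action is free modulo the stabilizer. Thus $h_b(u+k)=h_b(u)+\lambda(k)$, with $\lambda(k)$ independent of $u$ by discreteness and continuity. The map $\lambda:\Z^n\to\Lambda_b$ is a homomorphism. It is an isomorphism because $h_b$ conjugates the deck group $\Z^n$ of $\widetilde Z_b$, which is a covering since $\tau_b$ is a homeomorphism, onto the full deck group $\Lambda_b$. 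Setting $u=0$ gives $\lambda(e_j)=h_b(e_j)=A(b,e_j)$, so $\lambda(k)=P(b)k$, which is \eqref{eq:equivariance}. The columns of $P(b)$ therefore form a basis of the rank-$n$ lattice $\Lambda_b$, so $P(b)$ is invertible. Finally, the geodesic flow is $\phi^{X_1}_s$, and in $a$-coordinates it lifts to $a\mapsto a+se_1$ because the flows commute.

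I expect the main obstacle to be justifying the chain-rule computation, since $\tau$ is only a homeomorphism. This is exactly what Lemma~\ref{lem:AC} is for. Surjectivity of $\lambda$ onto all of $\Lambda_b$ also needs the careful universal-cover argument above, rather than just producing some sublattice.
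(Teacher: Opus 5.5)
Your proposal is correct and follows the paper's proof essentially step for step. It uses the same comparison of the angular universal cover with the flow-time cover (your $\Phi_b$ is the paper's $\Theta_b$), the identity $\Phi_b^*\alpha_i=da_i$, the absolutely continuous lift from Lemma~\ref{lem:AC} to get $h_b=A(b,\cdot)$ on $Q$, and the deck-group conjugation showing that $\lambda_b$ maps $\Z^n$ onto the full stabilizer $\Lambda_b$, so the columns of $P(b)$ form a basis and $P(b)$ is invertible.
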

\begin{proof}
Write $\psi_i^t$ for the flow of $X_i$. Starting at $z_b$, the
commuting action gives an analytic covering
\begin{equation}\label{eq:Theta}
 \Theta_b:\R^n\longrightarrow L_b,\qquad
 \Theta_b(a)=\psi_1^{a_1}\circ\cdots\circ\psi_n^{a_n}(z_b).
\end{equation}
Its stabilizer $\Lambda_b=\Theta_b^{-1}(z_b)$ is the full lattice
from Proposition~\ref{prop:families}; its deck transformations are
translations $a\mapsto a+\lambda$, $\lambda\in\Lambda_b$.
The map $u\mapsto\tau(b,\rho(u))$ is another universal cover of
$L_b$, now with deck transformations $u\mapsto u+k$ for $k\in\Z^n$.
Uniqueness of universal covers gives a homeomorphism $h_b$ with
\begin{equation}\label{eq:covers}
 \Theta_b\circ h_b=\tau_b\circ\rho,
 \qquad h_b(0)=0,\qquad \tau_b(\theta)=\tau(b,\theta).
\end{equation}
This construction is for a fixed $b$. The map $h_b$ on the unbounded
cover is used only in this covering-space argument. The functions
used later in the counting theorem are its values on the cube $Q$,
identified with the constructible functions $A(b,u)$.

Commutativity of the flows implies
$d\Theta_b|_a(e_j)=X_j(\Theta_b(a))$. The duality of the coframe
therefore gives
\begin{equation}\label{eq:pullback}
 \Theta_b^*\alpha_i=da_i,
\end{equation}
where $a_i$ denotes the $i$th flow-time coordinate of $a\in\R^n$.
For $u\in Q$, the path $r\mapsto h_b(ru)$ is the lift of
$\gamma_{b,u}$ starting at zero. Lemma~\ref{lem:AC}(ii) makes this
lift absolutely continuous. By \eqref{eq:pullback}, the almost-everywhere
chain rule yields
\[
 \frac{d}{dr}\bigl(h_b(ru)\bigr)_i
     =\alpha_i|_{\gamma_{b,u}(r)}(\gamma_{b,u}'(r)).
\]
Integrating proves \eqref{eq:hA}. This calculation also explains why
the coframe integral records flow time despite the limited regularity
of the angular trivialization.

For $k\in\Z^n$, equation~\eqref{eq:covers} gives
$h_b(u+k)-h_b(u)\in\Lambda_b$. This difference is continuous in
$u$ and the lattice is discrete, so it is a constant $\lambda_b(k)$.
Conjugation by $h_b$ identifies the two deck groups; conjugation by
$h_b^{-1}$ is its inverse. Consequently
$\lambda_b:\Z^n\to\Lambda_b$ is an isomorphism onto the entire
stabilizer. Setting $u=0$ and $k=e_j$ gives
\[
 \lambda_b(e_j)=h_b(e_j)=A(b,e_j).
\]
Thus the columns of $P(b)$ form a basis of the full lattice, rather
than merely a sublattice. In particular $P(b)$ is invertible, and
additivity proves \eqref{eq:equivariance}. Its entries are constructible
because they are obtained by evaluating $A$ at fixed vertices of $Q$. Finally,
$X_1=X_E$, so geodesic motion in \eqref{eq:Theta} adds $s e_1$ to
the flow-time vector.
\end{proof}

\subsubsection{The exact endpoint equation and its integer label}
\begin{corollary}[Exact endpoint equation]\label{cor:endpoint}
For $b\in B$, $u,v\in Q$, and $s\in\R$,
\begin{equation}\label{eq:endpoint}
 \varphi_s(Z(b,u))=Z(b,v)
 \quad\Longleftrightarrow\quad
 \exists k\in\Z^n:\ A(b,v)-A(b,u)+P(b)k=s e_1.
\end{equation}
For fixed $b,u,v,s$, this integer vector, when it exists, is unique.
\end{corollary}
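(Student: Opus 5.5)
The plan is to lift everything to the flow-time universal cover $\Theta_b:\R^n\to L_b$ of Lemma~\ref{lem:developing} and read the equation off there. The key relation is \eqref{eq:covers}, $\Theta_b\circ h_b=\tau_b\circ\rho$. Together with \eqref{eq:hA} it gives
\[
 Z(b,u)=\Theta_b(h_b(u))=\Theta_b(A(b,u))
\]
for $u\in Q$, and likewise $Z(b,v)=\Theta_b(A(b,v))$.

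Next, the flows of $X_1=X_E$ and $X_2,\ldots,X_n$ commute, so $\varphi_s\circ\Theta_b(a)=\Theta_b(a+se_1)$ for every $a\in\R^n$ and $s\in\R$. This is the last assertion of Lemma~\ref{lem:developing}, and it can be checked directly from \eqref{eq:Theta} by moving $\psi_1^s$ past the other flows. Hence $\varphi_s(Z(b,u))=\Theta_b(A(b,u)+se_1)$.

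For the forward implication, suppose $\varphi_s(Z(b,u))=Z(b,v)$. Then $\Theta_b(A(b,u)+se_1)=\Theta_b(A(b,v))$. Since $\Theta_b$ is a covering whose fibers are cosets of the stabilizer $\Lambda_b$, there is $\lambda\in\Lambda_b$ with
\[
 A(b,u)+se_1=A(b,v)+\lambda .
\]
By Lemma~\ref{lem:developing} the columns of $P(b)$ form a basis of the full lattice $\Lambda_b$, not just a sublattice, so $\lambda=-P(b)k$ for some $k\in\Z^n$. Rearranging gives $A(b,v)-A(b,u)+P(b)k=se_1$.

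Conversely, suppose such a $k$ exists. Then $A(b,u)+se_1=A(b,v)+P(b)k$, and $P(b)k\in\Lambda_b$ is a deck translation of $\Theta_b$. Applying $\Theta_b$ to both sides gives $\varphi_s(Z(b,u))=\Theta_b(A(b,v))=Z(b,v)$.

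Uniqueness follows from the invertibility of $P(b)$ in Lemma~\ref{lem:developing}. With $b,u,v,s$ fixed, the equation determines $P(b)k=se_1-A(b,v)+A(b,u)$, and hence determines $k$.

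The only delicate point is making sure the lattice produced by the fiber of $\Theta_b$ is exactly $P(b)\Z^n$. That is precisely the full-lattice statement already proved in Lemma~\ref{lem:developing}, so no real obstacle remains. One should only check that the stabilizer of $z_b$ equals the stabilizer of every point, which holds because the action is by an abelian group.
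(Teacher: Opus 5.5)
Your proof is correct and follows the same route as the paper. You lift to the flow-time cover $\Theta_b$, use $\Theta_b\circ h_b=\tau_b\circ\rho$ with $h_b=A(b,\cdot)$ on $Q$, identify the fiber of $\Theta_b$ over $Z(b,v)$ with $A(b,v)+P(b)\Z^n$ via the full-lattice statement of Lemma~\ref{lem:developing}, and get uniqueness from invertibility of $P(b)$; you simply spell out both implications and the coset description of the fibers, which the paper's two-line proof leaves implicit.
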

\begin{proof}
The initial flow-time coordinate is $h_b(u)$, so its endpoint after
time $s$ is $h_b(u)+s e_1$. It projects to $Z(b,v)$ precisely when
\[
 h_b(u)+s e_1=h_b(v)+P(b)k
 \quad\text{for some }k\in\Z^n.
\]
Substitution of \eqref{eq:hA} gives \eqref{eq:endpoint}. Invertibility
of $P(b)$ proves uniqueness.
\end{proof}

For $s\geq0$, the lifted angular path beginning at $u$ is
\begin{equation}\label{eq:liftedorbit}
 \widetilde\theta(t)=h_b^{-1}(h_b(u)+t e_1),\qquad 0\leq t\leq s.
\end{equation}
By \eqref{eq:equivariance} and \eqref{eq:endpoint}, its final point
is $v+k$. This identifies the integer in the equation with the angular
winding vector used in the next section. The two appearances of
$e_j$ have distinct roles: in $A(b,e_j)$ it is a vertex of the
angular cube, while $e_1$ on the right of \eqref{eq:endpoint} is the
first flow-time direction. The matrix $P(b)$ sends integer angular
winding vectors to flow-time periods. The general change from angular
to flow-time coordinates is $h_b$, which need not be linear.

In Section~\ref{sec:flat-coframe}, the coframe integral gives
$A(b,u)=P(b)u$, and this equation reduces to $v-u+k=sp$.
The proof here establishes the same relation in flow-time coordinates
without assuming that the change from angles is linear.

\subsection{A uniform bound on winding}\label{sec:winding}
Equation~\eqref{eq:endpoint} has an integer unknown $k\in\Z^n$.
We first bound how many such vectors can occur for lengths $s\leq T$.
The bound will use only global subanalyticity of short angular
orbit segments.
It will be valid on every retained torus, including arbitrarily close
to singular fibers.

\subsubsection{A short circle path has uniformly bounded lifted displacement}
The next lemma isolates the topological observation. It bounds the
displacement of a real lift. A lift can be chosen separately for
each path; no regularity with respect to the parameters is needed.

\begin{lemma}[Uniform displacement for a family of circle paths]
\label{lem:circle}
Let $D$ be globally subanalytic and let
$c:D\times[0,1]\to S^1$ be globally subanalytic. Assume that
$r\mapsto c(p,r)$ is continuous for every $p\in D$. There is
$C_*<\infty$ such that every continuous lift $x:[0,1]\to\R$ with
$c(p,r)=e^{2\pi i x(r)}$ satisfies
\[
 \sup_{r,r'\in[0,1]}|x(r)-x(r')|\leq C_*.
\]
\end{lemma}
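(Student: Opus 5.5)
The plan follows the outline in Section~\ref{sec:motivation-bounds}. We record the times at which the circle path passes through the fixed point $1\in S^1$, and apply Lemma~\ref{lem:uniform} to this family of time sets. Concretely, set
\[
 \mathcal Z=\{(p,r)\in D\times[0,1]: c(p,r)=1\}.
\]
This set is globally subanalytic, being the preimage of a point under a globally subanalytic map. Lemma~\ref{lem:uniform}, with parameter $p$ and fiber variable $r$, gives one integer $K$ such that every fiber $\mathcal Z_p\subset[0,1]$ has at most $K$ connected components. Each component is a point or a closed interval of $[0,1]$.

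Next fix $p$ and a continuous lift $x$. At any $r\in\mathcal Z_p$ we have $x(r)\in\Z$. On one connected component $I$ of $\mathcal Z_p$, the function $x$ is continuous and integer-valued, so it is constant. Hence $x(\mathcal Z_p)$ contains at most $K$ integers.

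Now take $r<r'$. If $|x(r)-x(r')|\geq m$, then the intermediate value theorem shows that $x$ attains at least $m-1$ distinct integers strictly between $x(r)$ and $x(r')$. Each such value is attained at a time in $\mathcal Z_p$, and distinct integers lie in distinct components. Therefore $m-1\leq K$, and we may take $C_*=K+2$. This constant depends only on the family $c$, not on $p$ or on the chosen lift, since any two lifts differ by a constant integer.

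The only step needing care is the first one. The hypotheses must be applied to the whole family in the joint variables $(p,r)$, as the remark after Lemma~\ref{lem:uniform} stresses. Here $c$ is assumed jointly globally subanalytic, with $S^1$ embedded in $\R^2$ so that the condition $c(p,r)=1$ is a pair of real equations. Continuity in $r$ is used only for the existence of the lift and for the intermediate value argument. No continuity in $p$ is needed. I expect no real obstacle beyond checking that a component of $\mathcal Z_p$ on which $x$ is constant is exactly what the count of components controls.
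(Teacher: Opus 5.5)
Your proposal is correct and follows essentially the same argument as the paper. Like the paper, you cut the circle at $1$ and apply Lemma~\ref{lem:uniform} to the joint zero set $c^{-1}(1)\subset D\times[0,1]$. You then use that the lift is constant on each component of the time set, and the intermediate value theorem gives $C_*=K+2$.
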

\begin{proof}
Choose the point $1\in S^1$ as a cut and define
\[
 W_p=\{r\in[0,1]:c(p,r)=1\}.
\]
The total set $\{(p,r):r\in W_p\}$ is globally subanalytic, being
the inverse image of the fixed circle point $1$ under $c$.
Lemma~\ref{lem:uniform} therefore gives one integer $K$
bounding the number of connected components of every $W_p$.
For a lift $x$, we have $W_p=x^{-1}(\Z)$. On each connected
component of $W_p$, continuity forces $x$ to be constant at a single
integer.

If $|x(r)-x(r')|>K+2$, the intermediate value theorem forces $x$
to attain more than $K$ distinct integers on the interval between
$r$ and $r'$. Each such integer needs a different component of
$W_p$, a contradiction. Thus $C_*=K+2$ suffices.
\end{proof}

Counting components includes tangencies to the cut and intervals spent
at the cut; no assumption of transverse crossings is present. The
parameter domain $D$ can be noncompact, so the conclusion remains
uniform near the boundary of a regular torus family. The estimate
controls lifted displacement; no bound for the length or total
variation of the lifted path is asserted or needed.

\subsubsection{Apply the lemma to short orbits, then concatenate}
\begin{lemma}[Linear winding bound]\label{lem:winding}
There is a constant $C_0$ such that every endpoint relation
\eqref{eq:endpoint}, in any retained chart, with $s\geq0$ and
$u,v\in Q$, satisfies
\begin{equation}\label{eq:winding}
 \|k\|_\infty\leq C_0(1+s),
 \qquad \|k\|_\infty=\max_{1\leq j\leq n}|k_j|.
\end{equation}
\end{lemma}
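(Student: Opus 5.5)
We apply Lemma~\ref{lem:circle} to unit-time orbit segments and then concatenate. Fix one retained family $\tau:B\times\T^n\to U$; there are finitely many, so at the end we take the maximum of the constants. For each coordinate index $j$, let $\pr_j:\T^n\to S^1$ be the $j$th circle projection.

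First we set up the family of circle paths. Take the parameter domain $D=U$ and define
\[
 c_j:U\times[0,1]\to S^1,\qquad c_j(z,r)=\pr_j\bigl(\tau^{-1}(\varphi_r(z))\bigr).
\]
This is well defined because $U$ is invariant under $X_1=X_E$. It is globally subanalytic because $\tau^{-1}$ is (inverse of a globally subanalytic bijection) and $(r,z)\mapsto\varphi_r(z)$ is, by Lemma~\ref{lem:flow} with $R=1$, restricted to the globally subanalytic set $[0,1]\times U$. For each $z$ the map $r\mapsto c_j(z,r)$ is continuous, since $\tau^{-1}$ is a homeomorphism onto its image. Lemma~\ref{lem:circle} then gives $C_*^{(j)}$, and we set $C_*=\max_j C_*^{(j)}$. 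The noncompactness of $U$ is harmless because the lemma does not require compact $D$.

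Next we identify the lifts. Take an endpoint relation \eqref{eq:endpoint} with $s\ge0$, $u,v\in Q$. By \eqref{eq:liftedorbit}, $\widetilde\theta(t)=h_b^{-1}(h_b(u)+te_1)$ is a continuous lift through $\rho$ of $t\mapsto\tau^{-1}\varphi_t(Z(b,u))$, with $\widetilde\theta(0)=u$ and $\widetilde\theta(s)=v+k$. The continuity comes from $h_b$ being a homeomorphism, and \eqref{eq:covers} gives the lifting property. Split $[0,s]$ into $m=\lceil s\rceil$ intervals $[t_{i-1},t_i]$, each of length at most one (or take $m=1$ with the single interval $[0,s]$ when $s<1$). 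On the $i$th interval, $r\mapsto\widetilde\theta_j(t_{i-1}+r(t_i-t_{i-1}))$ is a continuous real lift of the circle path $c_j(\varphi_{t_{i-1}}(Z(b,u)),\,r(t_i-t_{i-1}))$.

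The one subtlety is the reparametrization: a segment of duration less than one is a time rescaling of the family, not a member of it. We handle this by enlarging the parameter to $(z,\delta)\in U\times[0,1]$ and using $c_j(z,\delta,r)=\pr_j\tau^{-1}\varphi_{\delta r}(z)$, which is still globally subanalytic. Lemma~\ref{lem:circle} applied to this enlarged family bounds each piece's displacement by $C_*$, uniformly in $z$ and $\delta$.

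Now sum the pieces. We get $|\widetilde\theta_j(s)-\widetilde\theta_j(0)|\le mC_*\le(1+s)C_*$. Hence
\[
 |k_j|\le|v_j-u_j|+(1+s)C_*\le1+(1+s)C_*,
\]
so $C_0=C_*+1$ works for this family. Maximizing over the finitely many families and the $n$ indices $j$ gives \eqref{eq:winding}.

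The main obstacle is confirming the global subanalyticity of $c_j$ on the whole of the noncompact set $U\times[0,1]$. This reduces to the closure properties: $U$ is the image of a globally subanalytic homeomorphism, the restriction of a globally subanalytic flow map to a globally subanalytic subset stays globally subanalytic, and composition with $\tau^{-1}$ and the coordinate projection preserves the class. No smoothness of $\tau$ is needed, because the bound in Lemma~\ref{lem:circle} is purely topological.
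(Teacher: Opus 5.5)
Your proposal is correct and is essentially the paper's proof. The paper also applies Lemma~\ref{lem:circle} to the family $\vartheta_j(\varphi_{\sigma r}(z))$ with parameter $(z,\sigma)\in U\times[0,1]$, which is exactly your enlarged $\delta$-family; it then concatenates at most $\lceil s\rceil$ unit-duration pieces and uses $\|u-v\|_\infty\le1$ to get a constant of the form $C_*+1$, maximized over the finitely many charts, with your single-piece treatment of $s<1$ (including $s=0$) being a harmless variant of the paper's separate remark for $s=0$.
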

\begin{proof}
Fix a chart $U$. Define its $j$th angular projection by
\[
 \vartheta_j:U\longrightarrow S^1,\qquad
 \vartheta_j=\pr_j\circ\pr_{\T^n}\circ\tau^{-1}.
\]
Here $\pr_{\T^n}$ discards the parameter $b$ from
$B\times\T^n$, and $\pr_j$ selects its $j$th circle factor.
For an initial covector $z\in U$, a duration $\sigma\in[0,1]$,
and a normalized path variable $r\in[0,1]$, set
\begin{equation}\label{eq:circlefamily}
 c_j((z,\sigma),r)=\vartheta_j(\varphi_{\sigma r}(z)).
\end{equation}
These are well-defined because $U$ is invariant. Lemma~\ref{lem:flow}
with $R=1$ makes the short-time flow globally subanalytic.
The inverse $\tau^{-1}$ is globally subanalytic as well, so
\eqref{eq:circlefamily} has that property in all variables on
$D\times[0,1]$, where $D=U\times[0,1]$. For each parameter the path
is continuous. Lemma~\ref{lem:circle}, applied for $j=1,\ldots,n$,
gives a single $D_0$ bounding the displacement in every lifted angular
coordinate during any segment of duration at most one.

Now let $s>0$ and choose one continuous lift of the angular orbit
on $[0,s]$. Divide this interval into at most $\lceil s\rceil$ pieces
of duration at most one. Every new initial point remains in $U$, so
the same bound applies on each piece. By the triangle inequality,
\[
 \|\widetilde\theta(s)-\widetilde\theta(0)\|_\infty
 \leq D_0\lceil s\rceil.
\]
For the lift \eqref{eq:liftedorbit}, the endpoints are $u$ and $v+k$.
Since $\|u-v\|_\infty\leq1$, it follows that
\[
 \|k\|_\infty\leq D_0\lceil s\rceil+1
 \leq(D_0+1)(1+s).
\]
For $s=0$, the lift is constant and $u=v+k$, so
$\|k\|_\infty\leq1$. Taking the maximum of $D_0+1$ over the
finitely many charts proves \eqref{eq:winding}.
\end{proof}

The uniformity comes from the one family \eqref{eq:circlefamily}
defined on all of $U\times[0,1]$. We have not exhausted $U$ by
compact subsets or taken a limit of constants. Consequently the
argument is unaffected by divergence of $A$, $P$, or $P^{-1}$ near
the boundary.

In the flat model of Section~\ref{sec:flat-winding}, the direct
estimate $\|k\|_\infty\leq s+1$ has this same uniformity despite
the diverging period matrix. The short-circle-path argument replaces
that explicit estimate on a general family.

\subsection{Uniform multiplicity for a prescribed winding vector}
\label{sec:multiplicity}
There are now only $O(T^n)$ possible labels. We must still control
how many geodesics can have the same label. The distinction between
parameters and unknowns is central to this step.

\subsubsection{Write one family of equations with constructible coefficients}
Restore the chart index $\ell$ and write
$Z_\ell(b,u)=\tau_\ell(b,\rho(u))$. For counting, use
$u,v\in Q_0=[0,1)^n$: every point of $\T^n$ then has exactly one
representative. The functions $A_\ell$ are still defined on the
closed cube $Q$, as required for their period columns.

The \emph{parameters} of the family are
\[
 (q,y,T,\kappa)\in M\times M\times[1,\infty)\times\R^n.
\]
They specify the base endpoints, a length cutoff, and a real vector
that will later be restricted to integer labels. For fixed parameters,
the \emph{unknowns} are $(b,u,v,s)$: the torus, its initial and final
angular representatives, and the geodesic length. Define the solution
fiber by
\begin{equation}\label{eq:Rfamily}
 \begin{split}
 \cR_\ell(q,y,T,\kappa)=\bigl\{(b,u,v,s):\;&b\in B_\ell,\quad
       u,v\in Q_0,\quad 0<s\leq T,\\
 &\pi Z_\ell(b,u)=q,\quad \pi Z_\ell(b,v)=y,\\
 &A_\ell(b,v)-A_\ell(b,u)+P_\ell(b)\kappa=s e_1\bigr\}.
\end{split}
\end{equation}

\paragraph{Why the two endpoint coordinates have the same $b$.}
The pairs $(b,u)$ and $(b,v)$ describe points of the unit cotangent
bundle $S$, whose dimension is $2n-1$. The points $q,y$ belong to
the $n$-dimensional base manifold and are obtained by projection.
Specifically, for an actual connecting geodesic, let
\[
 z_0=(q,p_0)=Z_\ell(b,u),\qquad
 z_s=\varphi_s(z_0)=(y,p_s)=Z_\ell(b,v).
\]
The momenta $p_0,p_s$ are unit covectors. Conservation of the first
integrals gives
\[
 \frac{d}{dt}F_i(\varphi_t(z_0))
       =\{F_i,E\}(\varphi_t(z_0))=0\quad(2\leq i\leq n),
 \qquad f(z_s)=f(z_0)=b.
\]
The trajectory also stays in the same connected component of the
level set, so its chart index $\ell$ is unchanged. Its angular
position can change from $u$ to $v$ while its torus label $b$ is
fixed. Projecting these two phase-space points gives exactly the
conditions $\pi Z_\ell(b,u)=q$ and $\pi Z_\ell(b,v)=y$ in
\eqref{eq:Rfamily}. They allow $q$ and $y$ to be distinct.

A base point $q$ alone does not specify $b$: one must first choose
an initial unit covector $p_0$, and then $b=f(q,p_0)$. Consequently
different geodesics joining the same $q,y$ may have different values
of $b$. The equation searches over $b$ as an unknown. For a fixed
$b$, it only counts those segments whose initial and terminal
covectors lie on that particular torus; if none exist, the corresponding
solution set is empty. Starting instead with two labels $b,b'$ would
require the additional conservation equation $b'=b$, so
\eqref{eq:Rfamily} has already eliminated that redundant unknown.
In general $p_0\in T_q^*M$ and $p_s\in T_y^*M$ lie in different
cotangent spaces. The conserved equality is $f(z_s)=f(z_0)$;
it does not identify these covectors.
The formula for $b$ in Section~\ref{sec:flat-solutions} illustrates
how this common value can vary between geodesics with the same base
endpoints.

\paragraph{Verification of the analytic hypotheses.}
We now verify the hypotheses of Theorem~\ref{thm:integration}(ii)
for these equations. Regard $q$ and $y$ in the fixed analytic Euclidean
embedding of $M$. The full domain
\[
 \begin{split}
 D_\ell=\bigl\{(q,y,T,\kappa,b,u,v,s):\;&q,y\in M,\ T\geq1,
 \ \kappa\in\R^n,\\
 &b\in B_\ell,\ u,v\in Q_0,\ 0<s\leq T\bigr\}
 \end{split}
\]
is globally subanalytic. On this domain the coordinates of
$\pi Z_\ell(b,u)-q$ and $\pi Z_\ell(b,v)-y$ are globally
subanalytic. Lemmas~\ref{lem:Adefinable} and \ref{lem:developing}
make each coordinate of
\[
 A_\ell(b,v)-A_\ell(b,u)+P_\ell(b)\kappa-s e_1
\]
constructible: evaluating at $(b,u)$ or $(b,v)$ is composition with
a globally subanalytic projection, and multiplication by $\kappa_j$
and finite addition preserve the form \eqref{eq:constructible}.
Thus \eqref{eq:Rfamily} is precisely a finite system of equations
of the type covered by Theorem~\ref{thm:integration}(ii), jointly in
all parameters and unknowns. In particular $T$ and $s$ can be
unbounded, since they enter only through inequalities and linear terms.

For noninteger $\kappa$, the solutions need not represent geodesics,
and a fiber may be infinite. Such parameters are used only to place
every integer-label equation inside the same family of real equations.

\subsubsection{Prove finiteness geometrically, then make it uniform}
\begin{lemma}[Uniform multiplicity for a fixed winding label]
\label{lem:multiplicity}
For each chart $\ell$ there is an integer $m_\ell<\infty$ such that
\begin{equation}\label{eq:multiplicity}
 \#\cR_\ell(q,y,T,k)\leq m_\ell
 \qquad((q,y)\in\cG,\ T\geq1,\ k\in\Z^n).
\end{equation}
\end{lemma}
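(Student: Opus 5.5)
The plan has two steps, following the order announced in the heading: first prove that each relevant fiber is finite using geometry, then obtain a uniform bound from Theorem~\ref{thm:integration}(ii).

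\emph{Step 1 (finiteness at good parameters).} Fix $(q,y)\in\cG$, $T\geq1$ and $k\in\Z^n$. Define a map from $\cR_\ell(q,y,T,k)$ to connecting geodesics by sending $(b,u,v,s)$ to the initial vector $s\,(Z_\ell(b,u))^\sharp_g\in T_qM$. This vector has norm $s\in(0,T]$. By Corollary~\ref{cor:endpoint} with the integer $k$, we have $\varphi_s(Z_\ell(b,u))=Z_\ell(b,v)$, so the geodesic ends at $\pi Z_\ell(b,v)=y$ and the vector is counted by $N_{q,y}(T)$.

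This map is injective. The initial vector determines $s$ as its norm, and it determines the unit covector $z_0=Z_\ell(b,u)$. Because $\tau_\ell$ is a homeomorphism and $u\in Q_0$ is the unique representative, $z_0$ determines $(b,u)$. The endpoint $\varphi_s(z_0)$ then determines $v$ in the same way.

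Hence $\#\cR_\ell(q,y,T,k)\leq N_{q,y}(T)<\infty$ by Lemma~\ref{lem:goodpairs}(ii). This bound is not yet uniform.

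\emph{Step 2 (uniformity).} The verification just before the lemma shows that the total set $\{(q,y,T,\kappa,b,u,v,s)\}$ is cut out of the globally subanalytic domain $D_\ell$ by finitely many equations with constructible coefficients. The parameters are $p=(q,y,T,\kappa)$ and the unknowns are $x=(b,u,v,s)$.

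Theorem~\ref{thm:integration}(ii) therefore gives one integer $K_\ell$ bounding the number of connected components of every fiber $\cR_\ell(p)$, for all real $p$. This includes noninteger $\kappa$ and non-good endpoints, even though such fibers may be infinite. Any finite fiber has at most $K_\ell$ points. By Step 1, the fibers with $(q,y)\in\cG$ and $\kappa=k\in\Z^n$ are finite, so $m_\ell=K_\ell$ works.

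The order is essential. The bound is obtained for the whole real family first, and only then specialized to integer labels and good endpoints. Neither of those restrictions needs to be definable.

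\emph{Main obstacle.} The delicate point is the applicability of Theorem~\ref{thm:integration}(ii), that is, checking that the system really is of the admissible form. The conditions $u,v\in Q_0$, $b\in B_\ell$ and $0<s\leq T$ are globally subanalytic domain constraints. The projections $\pi Z_\ell$ are globally subanalytic, and $A_\ell$ and $P_\ell$ are constructible by Lemmas~\ref{lem:Adefinable} and~\ref{lem:developing}. The unbounded variables $T$, $s$ and $\kappa$ enter only polynomially, which is allowed because $g_\nu$ in \eqref{eq:constructible} may be such polynomials.

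A secondary point is injectivity in Step 1, which relies on the $U_\ell$ being disjoint and on $Q_0$ giving unique representatives. These facts guarantee that each solution corresponds to exactly one counted initial velocity.
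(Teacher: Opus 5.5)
Your proposal is correct and follows the paper's proof step for step: you first get finiteness of $\cR_\ell(q,y,T,k)$ for good endpoints and integer labels from the injective map $(b,u,v,s)\mapsto s\,Z_\ell(b,u)^\sharp_g$ into the finitely many connecting vectors counted by $N_{q,y}(T)$, and you then apply Theorem~\ref{thm:integration}(ii) to the whole real-parameter family $(q,y,T,\kappa)$ before specializing to $(q,y)\in\cG$ and $\kappa=k$. One small correction: disjointness of the $U_\ell$ plays no role in injectivity within a fixed chart, since bijectivity of $\tau_\ell$ and of $\rho$ on $Q_0$ already suffices; it is needed only to avoid double counting across charts when summing in the proof of Theorem~\ref{thm:count}.
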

\begin{proof}
\emph{First show that the relevant fibers are finite.}
Fix $(q,y)\in\cG$, $T\geq1$, and $k\in\Z^n$. By
Corollary~\ref{cor:endpoint}, every solution of \eqref{eq:Rfamily}
gives an actual connecting geodesic with initial vector
\begin{equation}\label{eq:injective}
 (b,u,v,s)\longmapsto\xi=s\,Z_\ell(b,u)^\sharp_g\in T_qM.
\end{equation}
Its length is $s\leq T$ and its terminal point is $y$.

This assignment is injective. Indeed, a nonzero initial vector $\xi$
determines $s=|\xi|_g$ and the unit initial covector
$z=(\xi/s)^\flat_g$, where $\flat_g$ is the inverse of $\sharp_g$.
It then determines $b=f(z)$, the terminal covector $\varphi_s(z)$,
and their unique representatives $u,v\in Q_0$ under $\tau_\ell$.
Conversely, every connecting geodesic whose initial covector is in
$U_\ell$ supplies these unknowns and exactly one integer $k$ by
Corollary~\ref{cor:endpoint}. Thus no geodesic is counted twice
either within a fiber or under different winding labels.

Lemma~\ref{lem:goodpairs} gives finitely many connecting geodesics
of length at most $T$ for the chosen endpoint pair. The injectivity
of \eqref{eq:injective} makes $\cR_\ell(q,y,T,k)$ finite. This
finiteness comes from the compact velocity disk and nonconjugacy,
not from compactness of the half-open angular coordinate domain.

\emph{Now apply uniform finiteness to the real-parameter equations.}
Apply Theorem~\ref{thm:integration}(ii) to the system verified above,
with parameters $(q,y,T,\kappa)$ and solution variables $(b,u,v,s)$.
It bounds every finite solution set by one integer $m_\ell$.
At $(q,y)\in\cG$ and $\kappa=k\in\Z^n$ these sets have just
been proved finite, so the same bound yields \eqref{eq:multiplicity}.
Other parameter values may give infinite solution sets. There is also
no need for $\cG$ or the selected integer parameters to have a
subanalytic description: the bound was established on the whole
real-parameter family before making that selection.
\end{proof}

The order of these two steps prevents a circular argument. Sard's
theorem and compactness establish ordinary finiteness before
Theorem~\ref{thm:integration}(ii) supplies a common bound. Neither the endpoints
nor the cutoff $T$ occur in the resulting constant $m_\ell$.

Section~\ref{sec:flat-solutions} solves the flat system explicitly:
a winding label determines $s$ and $p$, and therefore $b$ and the
family. There $m_+=m_-=1$. Uniform finiteness replaces this explicit
uniqueness in the general case.

\subsection{Sum over the winding labels}\label{sec:sum}
\begin{proof}[Proof of Theorem~\ref{thm:count}]
Fix $(q,y)\in\cG$ and $T\geq1$. Lemma~\ref{lem:goodpairs} places
every connecting geodesic in a retained chart, and disjointness of
the $U_\ell$ makes that chart unique. The preceding section gives
a unique solution of \eqref{eq:Rfamily} and a unique integer label.
By Lemma~\ref{lem:winding}, every such label satisfies
$\|k\|_\infty\leq C_0(1+T)$. Hence
\begin{align}
 N_{q,y}(T)
 &=\sum_{\ell=1}^L
   \sum_{\substack{k\in\Z^n\\\|k\|_\infty\leq C_0(1+T)}}
       \#\cR_\ell(q,y,T,k)\notag\\
 &\leq\left(\sum_{\ell=1}^L m_\ell\right)
       \bigl(2\lfloor C_0(1+T)\rfloor+1\bigr)^n\notag\\
 &\leq C(1+T)^n,\label{eq:finalcount}
\end{align}
where, for example,
$C=\max\{1,(\sum_\ell m_\ell)(2C_0+1)^n\}$ suffices.
The set $\cG$, the winding constant $C_0$, and all multiplicity
constants $m_\ell$ were chosen independently of $q,y,T$.
\end{proof}

\section{From polynomial counting to zero entropy}\label{sec:entropy}
Before integrating the count, we verify its measurability at every
endpoint pair, including those where it may be infinite. This step
uses only a smooth metric on a closed manifold.

\begin{lemma}[Measurability of geodesic counting]\label{lem:measurable}
For each fixed $T>0$, the extended nonnegative function
$(q,y)\mapsto N_{q,y}(T)$ is Borel measurable.
\end{lemma}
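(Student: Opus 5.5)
We will express $N_{q,y}(T)$ as a countable supremum of Borel functions, each built from indicator functions of closed or open sets. Let $\Exp:TM\to M\times M$ be the smooth map \eqref{eq:Exp}, and set
\[
 K_T=\{(q,\xi)\in TM:0<|\xi|_g\leq T\}.
\]
Then $N_{q,y}(T)$ is the cardinality of the fiber $\Exp^{-1}(q,y)\cap K_T$. Because $K_T$ is not compact, we first write it as the increasing union of the compact annuli
\[
 K_T^{(m)}=\{1/m\leq|\xi|_g\leq T\}.
\]
Cardinality is monotone under increasing unions, so
\[
 N_{q,y}(T)=\sup_m\#\bigl(\Exp^{-1}(q,y)\cap K_T^{(m)}\bigr).
\]
It therefore suffices to prove that, for a compact set $K\subset TM$ and a continuous map $\Exp$, the function $n_K(x)=\#(\Exp^{-1}(x)\cap K)$ is Borel on $M\times M$ with values in $\{0,1,2,\ldots,\infty\}$.

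For this we use a standard Banach-indicatrix type argument. Fix a countable sequence of finite partitions $\mathcal P_j$ of $K$ into Borel pieces of diameter less than $1/j$, with each $\mathcal P_{j+1}$ refining $\mathcal P_j$. For instance, intersect $K$ with dyadic cubes in finitely many fixed coordinate charts, made disjoint by taking differences. For each piece $E$, the image $\Exp(E)$ need not be Borel in general. To avoid this, we work with closures. Set
\[
 n_j(x)=\#\{E\in\mathcal P_j:x\in\Exp(\overline E)\}.
\]
Each $\Exp(\overline E)$ is compact, hence Borel, so each $n_j$ is a finite sum of indicators of compact sets and is Borel.

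The key claim is that $n_K(x)=\lim_j n_j(x)$ for every $x$, including when $n_K(x)=\infty$. Two inequalities are needed.

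\emph{Lower bound.} Suppose the fiber contains $N$ distinct points. For large $j$, their mutual distances exceed $2/j$. Points of the fiber lying in $K$ lie in pieces $E$, hence in their closures. Distinct points that are far apart lie in different pieces, so $n_j(x)\geq N$ eventually. This gives $\liminf_j n_j\geq n_K$.

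\emph{Upper bound.} This is the main obstacle, because closures of different pieces overlap. If $x\in\Exp(\overline E)$, there is a point $\zeta\in\overline E\subset K$ with $\Exp(\zeta)=x$, since $K$ is closed. Each fiber point lies in the closures of at most a bounded number $c$ of pieces. Here $c$ depends only on the chart dimension, because dyadic cubes of one generation have bounded overlap of closures, and we use finitely many charts. So $n_j(x)\leq c\cdot n_K(x)$, which is too weak to give equality. Rather than fight the overlap, we will replace the limit by the two-sided characterization
\[
 \{n_K\geq N\}=\bigcap_j\bigcup\Bigl\{\bigcap_{i=1}^N\Exp(\overline{E_i}):E_1,\ldots,E_N\in\mathcal P_j\text{ with pairwise distances }\geq 2/j\Bigr\}.
\]
Here distance between pieces means distance between their closures. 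The right-hand side is a countable intersection of finite unions of compact sets, hence Borel.

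To verify the displayed identity: if $n_K(x)\geq N$, take $N$ distinct fiber points. For every large $j$ they lie in pieces whose closures are pairwise at distance at least $2/j$, since the point separation exceeds $4/j$ plus the diameters. For the finitely many small $j$, we make the condition monotone by stating it for $j\geq j_0$ and taking a union over $j_0$:
\[
 \{n_K\geq N\}=\bigcup_{j_0}\bigcap_{j\geq j_0}(\cdots).
\]
Conversely, membership for some $j$ yields $N$ points $\zeta_i\in\overline{E_i}$ in the fiber, with $\overline{E_i}$ pairwise disjoint. These points are distinct, so $n_K(x)\geq N$.

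Thus each set $\{n_K\geq N\}$ is Borel, and hence $n_K$ is Borel, including on the set $\{n_K=\infty\}=\bigcap_N\{n_K\geq N\}$. Taking the supremum over $m$ shows that $(q,y)\mapsto N_{q,y}(T)$ is Borel measurable. Only continuity of $\Exp$ and compactness of the annuli are used, so the argument requires only a smooth metric, as the lemma asserts.
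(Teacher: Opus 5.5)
Your argument is correct, but it takes a more roundabout, discretized route than the paper. The paper works directly in the $r$-fold product of the compact disk bundle $D_T$. There the tuples $(\eta_1,\dots,\eta_r)$ with a common $\Exp$-value, $|\eta_i|_g\ge 1/m$, and $d_D(\eta_i,\eta_j)\ge 1/m$ form a compact set. Its image $K_{r,m}$ is therefore compact, and $\{N_{q,y}(T)\ge r\}=\bigcup_m K_{r,m}$. You instead exhaust by the annuli $K_T^{(m)}$, and on each one you replace the tuple space by intersections $\bigcap_i\Exp(\overline{E_i})$ over pieces of a fine Borel partition whose closures are $2/j$-separated. The underlying idea is the same: $N$ separated preimages is a closed condition, and continuous images of compact sets are compact. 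The paper's version, however, needs no partitions, no comparison of chart diameters, and only one index.

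Some cleanup of your write-up:
\begin{itemize}
\item Your stated ``key claim'' $n_K=\lim_j n_j$ is false in general. A fiber point on a common face of two dyadic pieces is counted twice for every $j$. Delete it, together with the overlap constant $c$, rather than leaving it stated.
\item The partitions need not be nested. The simplest choice is to fix one compatible metric, cover the compact annulus by finitely many balls of radius $1/(3j)$, disjointify, and discard empty pieces. This avoids dyadic cubes in several charts.
\item The $\bigcup_{j_0}\bigcap_{j\ge j_0}$ device is unnecessary. Your two inclusions already give $\{n_K\ge N\}=\bigcup_j S_j(N)$, where $S_j(N)$ is the finite union of compact sets $\bigcap_{i=1}^N\Exp(\overline{E_i})$ at stage $j$. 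This is an $F_\sigma$ set, exactly as in the paper.
\end{itemize}
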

\begin{proof}
Let $D_T=\{(q,\xi)\in TM:|\xi|_g\leq T\}$ be the compact disk
bundle, and choose a compatible metric $d_D$ on it. For integers
$r,m\geq1$, let $A_{r,m}\subset D_T^r$ consist of tuples
$(\eta_1,\ldots,\eta_r)$ with
\[
 \Exp(\eta_1)=\cdots=\Exp(\eta_r),\qquad
 |\eta_i|_g\geq1/m,\qquad
 d_D(\eta_i,\eta_j)\geq1/m\quad(i\neq j).
\]
Here $|(q,\xi)|_g$ means the fiber norm $|\xi|_g$.
These are closed conditions, so $A_{r,m}$ is compact. Its image
$K_{r,m}\subset M\times M$ under the common endpoint map is also
compact. Any $r$ distinct nonzero initial vectors in one fiber of
$\Exp$ satisfy the displayed lower bounds for some sufficiently
large $m$. Consequently
\[
 \{(q,y):N_{q,y}(T)\geq r\}
    =\bigcup_{m=1}^{\infty}K_{r,m}.
\]
Every such superlevel set is Borel, proving the assertion, including
at pairs with infinitely many connecting geodesics.
\end{proof}

Ma\~n\'e's formula \cite{Mane,BurnsP} states that
\begin{equation}\label{eq:mane}
 \htop(g)=\lim_{T\to\infty}\frac1T
    \log\int_{M\times M}N_{q,y}(T)\dd\vol_g(q)\dd\vol_g(y).
\end{equation}
The metric identification of unit tangent and cotangent bundles
conjugates their geodesic flows, so this formula applies to our
normalization. Omitting constant geodesics changes the count only
on the diagonal, a null subset of $M\times M$.

\begin{samepage}
\begin{lemma}[Normalization of the integrability hypotheses]
\label{lem:normalization}
Let $(M^n,g)$ be closed, connected, and real analytic, with $n\geq2$.
Each of the following hypotheses supplies integrals satisfying
Definition~\ref{def:integrability} for the same metric $g$.
\begin{enumerate}[label=(\roman*)]
\item There are single-valued real-analytic functions
$I_2,\ldots,I_n:S\to\R$, constant along the unit-speed geodesic
flow and with linearly independent differentials on a dense subset
of $S$. Their radial extensions
\[
 J_i(q,p)=I_i(q,p/|p|_g),\qquad p\neq0,
\]
satisfy $\{J_i,J_j\}|_S=0$.
\item On an open neighborhood $W$ of an entire surface
$E^{-1}(e_*)$, $e_*>0$, there are single-valued real-analytic
functions $F_1=E,F_2,\ldots,F_n$ that Poisson commute on $W$
and have independent ambient differentials on a dense subset of $W$.
\end{enumerate}
\end{lemma}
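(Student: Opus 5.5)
The plan is to treat the two hypotheses separately. Both reductions rest on the fiberwise dilation $\delta_\lambda(q,p)=(q,\lambda p)$ with $\lambda>0$. It satisfies $\delta_\lambda^*\lambda_{\mathrm{can}}=\lambda\,\lambda_{\mathrm{can}}$, hence $\delta_\lambda^*\omega=\lambda\omega$. Consequently
\[
 \{G\circ\delta_\lambda,\,H\circ\delta_\lambda\}=\lambda^{-1}\{G,H\}\circ\delta_\lambda ,
\]
and if $G,H$ are homogeneous of degrees $a,b$ in $p$, then $\{G,H\}$ is homogeneous of degree $a+b-1$. The point is that a homogeneous bracket vanishes on the whole punctured cotangent bundle as soon as it vanishes on $S$. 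We also use the criterion from Section~\ref{sec:regular}: on $S$, ambient independence of $dE,dF_2,\ldots,dF_n$ is equivalent to independence of the restrictions $dF_i|_{TS}$, because $dE\neq0$ on $S$ and $dE$ annihilates $TS$.

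\emph{Part (i).} Take $W=T^*M\setminus 0_M$, an open neighborhood of $S$. Each $J_i$ is real analytic there and homogeneous of degree $0$. The brackets $\{J_i,J_j\}$ have degree $-1$ and vanish on $S$ by hypothesis, so they vanish on $W$. The bracket $\{E,J_i\}$ has degree $1$. On $S$ it equals $X_E I_i$, since $X_E$ is tangent to $S$, $J_i|_S=I_i$, and $X_E|_S$ generates the unit-speed flow, along which $I_i$ is constant. Thus $\{E,J_i\}$ vanishes on $S$, hence on $W$. Finally $dJ_i|_{TS}=dI_i$, so the stated criterion turns the density hypothesis on the $dI_i$ directly into (A2).

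\emph{Part (ii).} For $c>0$, rescaling by $\lambda=\sqrt{2c}$ maps $S$ onto $E^{-1}(c)$. We set $\widetilde F_1=E$ and $\widetilde F_i=F_i\circ\delta_\lambda$ for $i\geq2$, on $\delta_\lambda^{-1}W$. Poisson commutation transfers by the dilation identity, using $E\circ\delta_\lambda=\lambda^2E$. So everything reduces to choosing a level $c$ with $E^{-1}(c)\subset W$ on which $dE\wedge dF_2\wedge\cdots\wedge dF_n\neq0$ on a dense subset.

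This density on a level is the main obstacle. The hypothesis only gives density in $W$, and the dependence locus $Z$ (the zero set of the analytic form $dE\wedge dF_2\wedge\cdots\wedge dF_n$) might contain an open piece of the particular surface $E^{-1}(e_*)$. The fix I would try is to move the level. Compactness of $E^{-1}(e_*)$ gives an interval $(e_*-\epsilon,e_*+\epsilon)$ whose levels all lie in $W$. Since $Z$ is a closed analytic subset with empty interior, it has Liouville measure zero. By the coarea formula for the submersion $E$ near these levels, $Z\cap E^{-1}(c)$ has zero hypersurface measure for almost every $c$ in the interval. For such a $c$ the complement of $Z$ in $E^{-1}(c)$ is dense. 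Choosing one such $c$ and rescaling with $\lambda=\sqrt{2c}$ yields integrals satisfying Definition~\ref{def:integrability} for the same metric $g$.
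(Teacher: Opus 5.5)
Your proposal is correct. Part (i) follows the paper's argument. You take $W=T^*M\setminus 0_M$, use homogeneity under the dilations to spread the vanishing of $\{J_i,J_j\}$ (degree $-1$) and $\{E,J_i\}$ (degree $1$) from $S$ to all nonzero covectors, and turn density of independence of the $dI_i$ into (A2) via $\ker dE=TS$. The paper also proves the converse, which the statement does not ask for.

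There is one slip. Since $\delta_\lambda^*\omega=\lambda\omega$, the correct identity is
$\{G\circ\delta_\lambda,H\circ\delta_\lambda\}=\lambda\{G,H\}\circ\delta_\lambda$, as $\{q,\lambda p\}=\lambda\{q,p\}$ shows. Your factor $\lambda^{-1}$ would give homogeneity degree $a+b+1$, which contradicts the degree $a+b-1$ you correctly use afterwards. Only the nonvanishing of the factor matters, so nothing downstream is affected.

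In part (ii), you and the paper both leave the level $e_*$ (rightly, given the paper's example $F_2=(E-\tfrac12)p_1$) and rescale, but you choose the new level differently.

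\emph{The paper's route.} It takes one point $z_0$ of independence in an annulus $\{a<E<b\}\subset W$ and sets $e_0=E(z_0)$. It then applies the identity principle to the analytic bundle section $(dF_1\wedge\cdots\wedge dF_n)|_{S_{e_0}}$ on the sphere bundle $S_{e_0}$. That bundle is connected because $M$ is connected and $n\geq2$. So the specific level through any point of independence works, and the argument also shows that on each level independence is either dense or fails identically.

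\emph{Your route.} You note that the dependence locus $Z$ is a closed analytic set with empty interior, hence Liouville-null. Coarea or Fubini in the product structure of the annulus then shows that almost every level meets $Z$ in a hypersurface-null, hence nowhere dense, set.

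\emph{Comparison.} Your argument needs no connectedness of the level sets and produces a full-measure set of admissible levels arbitrarily close to $e_*$. It pays for this with a measure-theoretic step and does not single out a particular level. The paper's argument is more elementary and pointwise but uses the connectedness hypotheses. Both are valid.
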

\end{samepage}
\begin{proof}
\emph{Integrals given on the unit energy surface.}
In (i), each $J_i$ is analytic on $T^*M\setminus0_M$, where
$0_M$ denotes the zero section, and is homogeneous of degree zero
in the fibers. For $c>0$, put $\delta_c(q,p)=(q,cp)$. The identity
$\delta_c^*\omega=c\omega$ gives
\begin{equation}\label{eq:dilationbracket}
 \{F\circ\delta_c,G\circ\delta_c\}
       =c\{F,G\}\circ\delta_c.
\end{equation}
Since $J_i\circ\delta_c=J_i$ and $E\circ\delta_c=c^2E$,
\[
 \{J_i,J_j\}\circ\delta_c=c^{-1}\{J_i,J_j\},\qquad
 \{E,J_i\}\circ\delta_c=c\{E,J_i\}.
\]
The first bracket vanishes on $S$ by assumption. The second vanishes
there because $J_i|_S=I_i$ is a first integral. Every nonzero covector
is a positive dilation of a point of $S$, so both brackets vanish on
all of $T^*M\setminus0_M$. On $S$, independence of the $dI_i$
is equivalent to ambient independence of $dE,dJ_2,\ldots,dJ_n$,
because $dE\neq0$ and $\ker dE=TS$. Thus $E,J_2,\ldots,J_n$
satisfy Definition~\ref{def:integrability}.

Conversely, start with that definition and set $I_i=F_i|_S$.
Let $r(q,p)=(q,p/|p|_g)$ be the radial retraction onto $S$, and
let $\mathcal V=p\partial_p$ be the fiber radial vector field.
On $S$, where $dE(\mathcal V)=1$, differentiating the retraction gives
\[
 dr_z(w)=w-dE_z(w)\mathcal V_z,\qquad
 dJ_i=dF_i-dF_i(\mathcal V)dE.
\]
Pairing these covectors with the Poisson tensor shows
$\{J_i,J_j\}|_S=0$, because every resulting term contains either
$\{F_i,F_j\}$ or a bracket with $E$.
This verifies the equivalence with the unit-energy formulation (i).

\emph{Independence assumed in an ambient neighborhood.}
In (ii), compactness of $E^{-1}(e_*)$ gives numbers
$0<a<e_*<b$ such that $\{a<E<b\}\subset W$. Choose a point
$z_0$ of independence in this annulus and set $e_0=E(z_0)$.
The sphere bundle $S_{e_0}=E^{-1}(e_0)$ is connected: its base is
connected and its fiber is $S^{n-1}$. The ambient analytic section
\[
 \left.(dF_1\wedge\cdots\wedge dF_n)\right|_{S_{e_0}}
 \quad\text{of}\quad
 \left.\bigwedge\nolimits^n T^*(T^*M)\right|_{S_{e_0}}
\]
is nonzero at $z_0$. The identity principle for analytic sections on
a connected analytic manifold makes its nonvanishing locus dense
in $S_{e_0}$. Restriction here means restriction of an ambient
bundle section; pulling it back as a differential form on $S_{e_0}$
would give zero because $dE$ annihilates its tangent space.

Set $c=\sqrt{2e_0}$. Then $\delta_c(S)=S_{e_0}$, and on the open
neighborhood $\delta_c^{-1}(W)$ of $S$ the functions
\[
 \widetilde F_1=c^{-2}F_1\circ\delta_c=E,\qquad
 \widetilde F_i=F_i\circ\delta_c\quad(2\leq i\leq n)
\]
commute by \eqref{eq:dilationbracket}. Invertibility of $d\delta_c$
transfers dense ambient independence to $S$. These functions satisfy
Definition~\ref{def:integrability}; the metric and its unit-speed
geodesic flow have not been rescaled.
\end{proof}

An energy selection can be necessary in (ii) if one retains the
given integrals. On a flat two-torus,
$F_1=E$ and $F_2=(E-\tfrac12)p_1$ commute and are independent
on a dense ambient set, since
\[
 dE\wedge dF_2=(E-\tfrac12)dE\wedge dp_1.
\]
Their differentials are dependent at every point of $S$.
Ambient independence alone therefore does not permit direct use
of the counting theorem on a prescribed energy surface with the
original integrals.

\begin{proof}[Proof of Theorem~\ref{thm:entropy}]
Under Definition~\ref{def:integrability}, Lemma~\ref{lem:measurable}
shows that the count is a nonnegative measurable function. Its integral over
$M\times M$ equals its integral over the full-measure set $\cG$;
even infinite values on the null complement do not affect this
Lebesgue integral. Theorem~\ref{thm:count} therefore gives
\begin{equation}\label{eq:integralbound}
 \int_{M\times M}N_{q,y}(T)\dd\vol_g(q)\dd\vol_g(y)
 \leq C\vol_g(M)^2(1+T)^n\qquad(T\geq1).
\end{equation}
For $T\geq\diam(M)$, every distinct pair is joined by a minimizing
geodesic of length at most $T$. Thus the integral is at least
$\vol_g(M)^2>0$. The logarithms of these lower and upper bounds,
divided by $T$, both tend to zero. Equation~\eqref{eq:mane} yields
$\htop(g)=0$.

Lemma~\ref{lem:normalization} proves the other formulations stated
after the theorem.
\end{proof}

The constant independent of the endpoints is what permits the
average bound \eqref{eq:integralbound}. The fixed set $\cG$, valid
for every $T$, will also let us choose one endpoint pair for all
length cutoffs in Section~\ref{sec:elliptic}. The entropy argument
estimates the full endpoint average in \eqref{eq:mane}; it does not
assume that an invariant null set has zero entropy.

For completeness, a connected closed one-dimensional manifold has
periodic unit cotangent flow on two circles. For a disconnected compact
manifold, assume the stated integrability hypothesis on each connected
component of dimension at least two. There are finitely many components,
and the entropy of their disjoint union is their maximum, again zero.

\section{A further consequence: rational ellipticity}\label{sec:elliptic}
This consequence uses the counting theorem directly. Work with the
space of $H^1$ paths $\gamma:[0,1]\to M$ from fixed $q$ to fixed $y$.
It is homotopy equivalent to $\Omega M$: concatenate with a fixed
path from $y$ back to $q$. The energy functional is
\[
 \mathcal E(\gamma)=\tfrac12\int_0^1|\dot\gamma(t)|_g^2\dd t.
\]
Its critical points are constant-speed geodesics; at a critical point
of length $L$ the energy is $L^2/2$. The kernel of the Hessian consists
of Jacobi fields vanishing at both endpoints, so endpoint nonconjugacy
is precisely nondegeneracy of the critical point. Gromov's estimate
controls the lengths needed to represent loop-space homology, and
Morse inequalities yield the following count;
see \cite{Gromov,Milnor,P92,P94,Pbook}.

We use the lower bound stated in \cite[\S1.4]{Gromov}, with its
length-control argument in \S4.1: for a fixed such pair $q,y$, there
are $a>0$, $b\geq0$, and $T_0$ such that
\begin{equation}\label{eq:gromov}
 \sum_{j=1}^{\lfloor aT-b\rfloor}\dim_{\Q}H_j(\Omega M;\Q)
 \leq N_{q,y}(T)\qquad(T\geq T_0).
\end{equation}
Only constants for this fixed pair are needed. The published counting
statement uses a strict length cutoff and positive homological degrees.
Replacing the strict cutoff by $\leq T$ preserves the inequality;
degree zero will be added separately.

\begin{proof}[Proof of Corollary~\ref{cor:elliptic}]
Choose one $(q,y)\in\cG$. Lemma~\ref{lem:goodpairs} verifies
nonconjugacy for every connecting geodesic, as required in
\eqref{eq:gromov}. For an integer $N\geq1$, put
\[
 T_N=\max\left\{1,T_0,\frac{N+b+1}{a}\right\}.
\]
Then $\lfloor aT_N-b\rfloor\geq N$ and
$T_N\leq C_2(N+1)$ for some $C_2$ independent of $N$.
Simple connectivity makes $\Omega M$ connected, so
$\dim_{\Q}H_0(\Omega M;\Q)=1$. Equations~\eqref{eq:gromov}
and \eqref{eq:countbound} give
\begin{equation}\label{eq:betaproof}
 \beta_M(N)\leq1+N_{q,y}(T_N)
       \leq1+C(1+T_N)^n\leq C'(1+N)^n.
\end{equation}
Increasing $C'$ includes $N=0$.

To deduce rational ellipticity, note that a closed smooth manifold
has the homotopy type of a finite CW complex. Thus our simply connected
$M$ has finite rational type and finite rational
Lusternik--Schnirelmann category, as required by the homotopy growth
theorem. If its rational homotopy were infinite dimensional,
\cite[Theorem 2.1(ii)]{FHT82} would give $c>1$ and degrees
$q_i\to\infty$ with
\begin{equation}\label{eq:hyperbolic}
 \dim_{\Q}(\pi_{q_i}(M)\otimes\Q)\geq c^{q_i}.
\end{equation}
The Milnor--Moore identification, recalled in
\cite[p.~2521]{FHT17}, is
\[
 H_*(\Omega M;\Q)\cong U\bigl(\pi_*(\Omega M)\otimes\Q\bigr),
\]
where $U$ is the universal enveloping algebra of the graded homotopy
Lie algebra. The graded Poincar\'e--Birkhoff--Witt theorem makes its
canonical map into $U$ injective. Hence, for $j\geq2$,
\[
 \dim_{\Q}(\pi_j(M)\otimes\Q)
 \leq\dim_{\Q}H_{j-1}(\Omega M;\Q)\leq\beta_M(j-1).
\]
This contradicts \eqref{eq:hyperbolic} and the polynomial bound
\eqref{eq:betaproof}. Thus the rational homotopy of $M$ is finite
dimensional.
\end{proof}

\end{document}